 \documentclass[educ, sglanonrev]{arxiv_version}
\usepackage{tgtermes}
\usepackage{newtxtext}
\usepackage{newtxmath}
\usepackage{bm}
\usepackage{endnotes}
\OneAndAHalfSpacedXII % Current default line spacing
\usepackage{algpseudocode}
\usepackage{tikz}
\usetikzlibrary{arrows.meta}
\usepackage{fix-cm}
\usepackage{subcaption}
\usepackage{comment}
\newif\ifdiscrete
\discretefalse   % \discretetrue or \discretefalse
\ifdiscrete
  \excludecomment{CTonly}
  \newenvironment{DTonly}{}{}
\else
  \excludecomment{DTonly}
  
\fi
\usepackage{hyperref}
\usepackage{cleveref}
\newcommand{\mathbbm}[1]{\text{\usefont{U}{bbm}{m}{n}#1}}
\newcolumntype{P}[1]{>{\centering\arraybackslash}p{#1}}
\newcolumntype{N}{@{}m{0pt}@{}}
\newcommand{\E}[1]{\mathbb{E}\!\left[#1\right]}
\newcommand{\bE}{\mathbb{E}}
\newcommand{\new}[1]{#1}
\usepackage[sort&compress]{natbib}
 \bibpunct[, ]{[}{]}{,}{n}{}{,}%
 \def\bibfont{\small}%
\EquationsNumberedThrough    % Default: (1), (2), ...
\TheoremsNumberedThrough     % Preferred (Theorem 1, Lemma 1, Theorem 2)
\ECRepeatTheorems  %
\MANUSCRIPTNO{EDUC-0001-2026.00}
\begin{document}
%%%%%%%%%%%%%%%%
\RUNAUTHOR{}
\RUNTITLE{Transform Method for Stochastic Processing and Matching Networks}
\TITLE{Transform Method for Stochastic Processing and Matching Networks}
\ARTICLEAUTHORS{%
\AUTHOR{Sushil Mahavir Varma}
\AFF{Industrial and Operations Engineering, University of Michigan Ann Arbor \EMAIL{sushilv@umich.edu}}
\AUTHOR{Prakirt Raj Jhunjhunwala}
\AFF{Amazon, \EMAIL{prakirt2203@gmail.com}}
\AUTHOR{Daniela Hurtado-Lange}
\AFF{Kellogg School of Management, Northwestern University \EMAIL{daniela.hurtado@kellogg.northwestern.edu}}
\AUTHOR{Siva Theja Maguluri}
\AFF{Industrial and Systems Engineering, Georgia Institute of Technology \EMAIL{siva.theja@gatech.edu}}
} % end of the block
\ABSTRACT{%
\new{Modern service systems---ranging from cloud data centers and ride-hailing platforms to healthcare facilities---operate at massive scales where it is important to handle congestion. Queueing theory is used to understand the delay and queue length behavior in these systems. Except in simple queues, it is not possible to obtain a closed form solution for the quantities of interest, and so, one studies the system in certain asymptotic regimes such as the heavy traffic. The transform method, presented in this tutorial, is a framework to understand the steady-state behavior of Stochastic Processing and Matching Networks (SPNs/SMNs). By exploiting the zero-drift property of exponential test functions, the method derives explicit \textit{functional equations} (acting as a proxy for global balance equations) for the transforms (such as moment-generating functions) of queue-length distributions. These functional equations can be used to either characterize the exact behavior of the system in an asymptotic regime or to obtain non-asymptotic performance bounds on the mean, higher order moments, or tail bounds on the queue lengths.
Since its introduction for load-balancing in data center networks, the transform method, as a framework, has been extended to handle various features that arise in different systems, including customer abandonment, state-dependent arrivals, Markov-modulated arrivals, large-system scale, and multi-dimensional networks with multiple bottlenecks. This tutorial presents an overview of the transform method starting with the simplest setting viz., a single server queue. The transform method is introduced as a three step procedure. We then illustrate how the method can be adapted within this three-step framework to handle the aforementioned features.}
}%
\FUNDING{This work was partially supported by NSF grant EPCN-2144316 and CMMI-2140534.}
\KEYWORDS{Transform Method, Stochastic Networks, Steady-State Analysis, Drift, Service Systems}
\maketitle

\section{Introduction}
In our increasingly interconnected world, the efficiency of service systems\new{,} ranging from emergency departments and ride sharing systems to AI data centers and quantum communication networks\new{,} has become \new{important}. The operation of these systems requires dealing with inherent uncertainty arising due to the stochastic nature of events governing the dynamics of the process.
\new{Queueing theory is typically used to} tame this uncertainty by developing stylized models that yield both interpretable and implementable decision rules.
In this tutorial, we \new{consider} a class of complex service systems, formally known as Stochastic Processing (\citet{williams2016stochastic}) and Matching (\citet{mahavir2025stochastic}) Networks (SPNs/SMNs). In these models, \new{jobs arriving at random times needing a random amount of service are governed by} scheduling, routing, or matching policies, giving rise to stochastic processes whose behavior \new{determines the} system performance. The central analytical challenge is to characterize the steady-state queue-length distribution so that one can \new{use these} performance guarantees \new{to} make informed design decisions. \new{However, except in simple cases, SPNs/SMNs result in complex processes which are intractable, and closed form exact expressions are impossible to obtain. Therefore, one adopts an asymptotic viewpoint which enables one to characterize the exact behavior. A common regime that is studied is the heavy-traffic regime.}

In recent years, the transform method has emerged as a unified and \new{tractable} framework for the steady-state analysis of SPNs and SMNs (\citet{mahavir2025stochastic, jhunjhunwala2024design, lange2021asymptotic}). The core idea is to apply an \emph{exponential} test function to the queue dynamics and exploit the zero-drift condition in steady state. Because the exponential generating function encodes all moments simultaneously, a single calculation captures the entire queue-length distribution, yielding an exact \emph{functional equation} for the generating function of the steady-state queue length. This direct \new{approach not only provides exact characterization in the heavy-traffic asymptotic regime,} but also yields performance guarantees in non-asymptotic regimes.

The practical relevance of the transform method lies in its ability to deliver actionable performance guarantees in \new{stochastic networks}. Operators of these systems routinely need tight guarantees on \new{the mean delay, mean queue length, and} tail behavior, \new{for} service-level compliance. The transform method offers a direct way to obtain such guarantees because it produces explicit functional constraints on the transform of steady-state queue lengths, which can then be translated into quantitative bounds on \new{the mean, moments, or the tail.} This makes the method particularly useful in practice for designing routing, scheduling, or pricing controls: Complementing simulation \new{or} heuristics, one can employ the resulting transform-based inequalities to certify that a chosen control policy will meet performance thresholds at realistic scales. This tutorial demonstrates the simplicity, generalizability, and power of the transform method that adapts naturally to the operational features of modern OR applications.
These include the following.
\begin{itemize}
    \item \new{\textit{Customer abandonment.} In call centers, emergency departments, and ride-hailing systems, customers who wait too long abandon before being served. Abandonment fundamentally changes the system dynamics making it more challenging to analyze. However, the transform method is flexible enough to analyze such a system and characterize the resulting phase transition in the queue-length distribution, informing staffing and capacity decisions (\citet{jhunjhunwala2026jsqa}).}

    \item \new{\textit{Markov-modulated environments.} In wireless networks and data centers, channel conditions and demand patterns fluctuate according to an exogenous stochastic environment, introducing temporal correlations that break the independence assumptions of classical models. Such non-independent fluctuations can be modeled with Markov-modulated arrivals. The transform method, combined with the Poisson equation, can be used to absorb the correlations into an effective variance that governs the heavy-traffic distribution (\citet{HL-Gro-2026-Markov-Modulated}).}

    \item \textit{State-dependent arrivals.} In two-sided marketplaces such as ride-hailing platforms, the platform adjusts prices based on current congestion, making arrival rates depend on the system state. The transform method characterizes the distribution of the queue lengths in heavy traffic for a matching queue with dynamic/state-dependent arrivals (\citet{varma2022twosidedqueues}). This shows how pricing policies shape the steady-state congestion distribution, enabling platform designers to evaluatepricing strategies analytically.

    \item \textit{Large-scale load balancing.} \new{Today's} data centers route millions of requests across thousands of servers, and the routing policy must balance load to minimize delay. \new{Under popular load-balancing strategies, the system behaves as if it has a single bottleneck, and this phenomenon is called the state space collapse (SSC), and the system is said to satisfy the complete resource pooling condition. The transform method works in conjunction with the SSC and} yields explicit guarantees on the tail of \new{queue lengths}. The method also provides tight pre-limit error bounds in non-asymptotic scenarios, bridging the gap between asymptotic theory and the real-world performance (\citet{hurtado2020transform, jhunjhun2024exponential}).

    \item \textit{Multi-dimensional networks with multiple bottlenecks.} \new{An input-queued switch enables connections between several computers, and it enables connections between its input ports and output ports. Such switches not only power the internet, but also serve as an approximation to model data center networks. A switch is a representative of a multiple bottleneck system and exhibits SSC into a multi-dimensional subspace. The transform method can be used to characterize the \textit{joint} distribution of queue lengths in heavy traffic. This is a first step towards a broader understanding of systems that do not satisfy the complete resource pooling condition} (\citet{Jhun_heavy_traffic}).
\end{itemize}
\new{The transform method is a three-step procedure consisting of deriving the transform equation, approximating to second order and solve the functional equation. Each of the above mentioned extensions uses the same template while introducing new techniques to handle the challenges due to the richer structure.} Taken together, these applications illustrate that the transform method offers a tractable, adaptable, and broadly applicable framework for analyzing, designing, and controlling real-world stochastic \new{networks}.

\textit{How to read this tutorial.}
We assume the reader has graduate-level background in probability and
discrete-time Markov chains, along with basic familiarity with transforms
(moment generating and characteristic functions). Prior exposure to
heavy-traffic limits for the single-server queue not
required since we develop the necessary background as we go. The remainder of
the tutorial is organized in three tiers. Section~\ref{sec:transform} is the
\emph{introductory core}: it develops the three-step transform-method framework
for the discrete-time single-server ($G/G/1$) queue and derives the heavy-traffic
limit; it is largely self-contained and
suffices for a first reading. Section~\ref{sec:single-server-variants} is
\emph{intermediate}: it stays in the single-queue setting but illustrates
how the same three-step framework combines with customer abandonment (Section~\ref{sec:abandonment}), Markov-modulated arrivals (Section~\ref{sec:markov-modulated}), and state-dependent arrivals (Section~\ref{sec:matching-queue}), requiring basic knowledge of differential equations, the Poisson equation, and inverse
Fourier transforms, respectively. Section~\ref{sec:networks} is the most \emph{advanced}: it
scales to multi-dimensional stochastic networks and assumes some familiarity
with existing results in heavy-traffic theory.
Readers primarily interested in the method itself can stop after
Section~\ref{sec:single-server-variants}; those interested in network-level
applications should continue through Section~\ref{sec:networks}.

\section{The Transform Method: A Single-Server Queue Illustration}\label{sec:transform}
In this section, we introduce the transform method by developing it in the simplest possible setting: the discrete-time single-server queue.  We begin by defining the model and recalling the classical Kingman-type moment bounds (\citet{kingman}) obtained via quadratic Lyapunov functions.  We then explain why these polynomial-based approaches become \new{cumbersome} for characterizing the full distributional picture, which motivates the use of \emph{exponential} test functions.  After walking through the key steps of the method, we discuss the choice of transform,
explain how tail bounds follow naturally, and close with a comparison to other methods for steady-state analysis of queueing systems.

\subsection{The Discrete-Time Single-Server Queue}\label{sec:model}
Consider a discrete-time $G/G/1$ queue defined as follows. At each time slot $k = 0, 1, 2, \ldots$, a random number $a(k)$ of customers arrive to the queue, and the server can potentially serve $s(k)$ customers.  We assume $\{a(k)\}$ and $\{s(k)\}$ are i.i.d.\ sequences (independent of each other) with
\begin{equation*}
\mathbb{E}[a(k)] = \lambda, \quad \mathrm{Var}(a(k)) = \sigma_a^2, \qquad \mathbb{E}[s(k)] = \mu, \quad \mathrm{Var}(s(k)) = \sigma_s^2,
\end{equation*}
and that the system is stable, i.e., $\lambda < \mu$.  The \emph{heavy-traffic parameter} is
\[
\epsilon \;:=\; \mu - \lambda,
\]
which measures the system's slack: it is the
per-slot excess service capacity over the arrival rate. Thus, small
$\epsilon$ means a heavily loaded queue, and the regime $\epsilon \to 0$
captures the setting where delays are largest. Throughout the rest of the tutorial, ``heavy traffic''
will always mean $\epsilon \to 0$. For ease of analysis, also assume the existence of $A_{\max}, S_{\max} > 0$ such that $a(1) \leq A_{\max}$ and $s(1) \leq S_{\max}$ with probability 1. The queue length evolves according to the \emph{Lindley-type recursion}:
\begin{equation}\label{eq:lindley}
q(k+1) \;=\; \bigl[q(k) + a(k) - s(k)\bigr]^{+},
\end{equation}
where $[\,\cdot\,]^+ = \max(\,\cdot\,, 0)$. The queue length at the next slot equals the current queue length plus the arrivals that came in this slot minus the services completed this slot, with the result floored at zero (since the queue cannot go negative). Equation \eqref{eq:lindley} can be rewritten by introducing an additional variable as
\begin{equation}\label{eq:queue-evolution}
q(k+1) \;=\; q(k) + a(k) - s(k) + u(k),
\end{equation}
where $u(k)$ is the \emph{unused service} at time $k$.
The unused service represents the service that is wasted because the queue is empty: it is the difference between the \textit{potential} service
and the \emph{actual} service that was used.  By definition, $u(k) \ge 0$, and the following complementarity condition holds:
\begin{equation}\label{eq:complementarity}
q(k+1) \cdot u(k) = 0,
\end{equation}
as unused service is positive only when the queue drains to zero.  The
queue evolution~\eqref{eq:queue-evolution} and the complementarity condition~\eqref{eq:complementarity} completely characterize the queue evolution.

\subsection{Kingman's Bound via the Quadratic Lyapunov Function}\label{sec:kingman}
A classical approach to heavy-traffic analysis of the single-server queue uses a \emph{quadratic} Lyapunov function (or test function).  The idea is simple: apply the function $\varphi(q) = q^2$ to the queue evolution and use the fact that, in steady state, the expected ``drift'' of this function must be zero. The key observation is that by using the complementarity condition~\eqref{eq:complementarity}, we get
\[
q(k+1)^2 \;=\; \bigl(q(k) + a(k) - s(k)\bigr)^2 - u(k)^2,
\]
where we use $2\bigl(q(k) + a(k) - s(k)\bigr)\cdot u(k) = 2(q(k+1) -u(k))\cdot u(k) = -2 u(k)^2$.
As we showcase later in Section~\ref{sec:single-server-variants} and Section~\ref{sec:networks}, this use of the complementarity condition~\eqref{eq:complementarity} can be generalized to a large extent and is one of the core reasons behind the technical simplicity of the transform method. Next, setting the steady-state drift to zero, i.e., $\mathbb{E}[q(k+1)^2] = \mathbb{E}[q(k)^2]$, and expanding:
\begin{align}
0 &= \mathbb{E}\bigl[q(k+1)^2 - q(k)^2\bigr] = \mathbb{E}\bigl[(a-s)^2 + 2q(a-s) - u^2\bigr] = \sigma_a^2 + \sigma_s^2 + \epsilon^2 - 2\epsilon\,\mathbb{E}[q] - \mathbb{E}[u^2], \label{eq:quadratic_drift}
\end{align}
where we omitted the dependence on $k$ in steady state. In the last equality, we used the definition of variance to obtain $\mathbb{E}[(a-s)^2] = \sigma_a^2 + \sigma_s^2 + \epsilon^2$, and the definition of $\epsilon$ to obtain $\mathbb{E}[q(a-s)] = -\epsilon\,\mathbb{E}[q]$ by independence.  Rearranging, we get
$
2\epsilon\,\mathbb{E}[q] \;=\; \sigma_a^2 + \sigma_s^2 + \epsilon^2 - \mathbb{E}[u^2]
$.
To bound the unused service, we can simply take expectation in steady state in the queue evolution equation \eqref{eq:queue-evolution} and note that $\E{q(k+1)} = \E{q(k)}$ to get $\mathbb{E}[u] = \mathbb{E}[s-a] = \epsilon$. Thus, we have $0\leq \mathbb{E}[u^2] \leq S_{\max} \mathbb{E}[u] = \epsilon S_{\max}$, where we used that $0 \leq u \leq s \leq S_{\max}$ with probability 1. This bound on the unused service gives the celebrated \emph{Kingman bound}:
\begin{equation}\label{eq:kingman}
\frac{\sigma_a^2 + \sigma_s^2}{2\epsilon} - \frac{S_{\max}}{2} \leq \mathbb{E}[q] \leq \frac{\sigma_a^2 + \sigma_s^2}{2\epsilon} + \frac{\epsilon}{2}
\end{equation}
Note that the above equation holds for any $\epsilon > 0$, thus providing useful bounds on $\mathbb{E}[q]$ for any traffic. Moreover, rescaling by $\epsilon$ and taking $\epsilon \to 0$ gives $\mathbb{E}[\epsilon q] \to (\sigma_a^2 + \sigma_s^2)/2$, thus providing a tight expression for the expected scaled queue length in the heavy traffic limit.

In practice, one may need a handle on higher moments and tails beyond the mean to provide tight service level guarantees. To do this, one can go further and compute higher moments of $q$ by using higher-degree polynomial test functions: $\varphi(q) = q^3$ for the second moment, $\varphi(q) = q^4$ for the third, and so on.  In principle, the $n$-th moment can be bounded in terms of all lower moments using $\varphi(q) = q^{n+1}$ as the test function, so an induction argument applies. Using this method, \citet{atilla} obtain
\begin{align*}
    \bigg|\mathbb{E}[\epsilon^n q^n] - n! \left(\frac{\sigma_a^2 + \sigma_s^2}{2}\right)^n\bigg| = O(\epsilon) \quad \forall n \in \{1, 2, \hdots\},
\end{align*}
which matches the moments of an $\mathrm{Expo}\bigl(2/(\sigma_a^2 + \sigma_s^2)\bigr)$\new{, i.e., an exponential random variable with mean $(\sigma_a^2+\sigma_s^2)/2$. This suggests} that the scaled queue length $\epsilon q$ converges to an exponential distribution. Note that matching all moments does not characterize a distribution in general---this is the classical \emph{moment problem}---but when the moments do not grow too fast (Carleman's condition,
which the limiting exponential here satisfies; e.g., see \cite{koehler2025constructive}), they
uniquely determine the distribution. As one can imagine, this \new{inductive} approach becomes increasingly \new{tedious} as the underlying model becomes richer.  Each step requires careful bookkeeping of cross-terms and the use of previously established bounds to control the lower-order terms.

A simple and quite powerful idea here is to use an \emph{exponential}
test function in place of polynomial ones. Exponentials offer two
key advantages. First, they encode all moments simultaneously via the
series $e^{\theta\epsilon q} = \sum_{n=0}^{\infty}
(\theta\epsilon)^n q^n / n!$. Second, they are algebraically clean
under the Lindley-type update: whereas $(q + \Delta)^n$ requires a
binomial expansion (as
in~\eqref{eq:quadratic_drift}), the exponential factors as
$e^{\theta\epsilon(q+\Delta)} = e^{\theta\epsilon q} \cdot
e^{\theta\epsilon\Delta}$ (where $\Delta$ is any additive increment
such as $a - s$). Additive updates thus become multiplicative factors, \new{leading to simpler algebra}.

\subsection{The Transform Method}\label{sec:transform-method}
The central idea of the transform method, introduced by
\citet{hurtado2020transform}, is to use the exponential test function
$\varphi(q) = e^{\theta \epsilon q}$ for a carefully chosen parameter
$\theta$. Another way to see why this is powerful, recall the pattern of
Kingman's argument: loosely speaking, the zero-drift condition for a test function $V$
produces a bound on $\mathbb{E}[V'(q)]$, the expectation of its derivative in steady state for a single-server queue. For polynomial $V = q^{n+1}$, this yields the \new{$n$th moment. For} exponential
$V = e^{\theta\epsilon q}$\new{,} since the derivative is $\theta\epsilon\,V$\new{,} the same argument produces a bound on the
MGF $\mathbb{E}[e^{\theta\epsilon q}]$, which encodes all moments at
once. The method proceeds in three steps.

\subsubsection*{Step 1: Capturing the System Dynamics \new{and Deriving the Transform Equation}}
Note that the distribution of $q(k+1)$ is the same as that of $q(k)$ \new{in steady state}. For the time being, assuming that $\mathbb{E}[e^{\theta \epsilon q}] < \infty$ in the steady-state, we get
\begin{equation} \label{eq:zero_drift}
\mathbb{E}\bigl[e^{\theta\epsilon\, q(k+1)}\bigr] \;=\; \mathbb{E}\bigl[e^{\theta\epsilon\, q(k)}\bigr].
\end{equation}
The key lemma that connects the queue evolution to this exponential function exploits the complementarity condition~\eqref{eq:complementarity}. \new{From the definition of the unused service, we know that whenever the unused service kicks in, i.e., $u(k) > 0$, we have that $q(k+1) = 0$, which implies}
\begin{equation}\label{eq:key-lemma}
\bigl(e^{\theta\epsilon\, q(k+1)} - 1\bigr)\bigl(e^{-\theta\epsilon\, u(k)} - 1\bigr) = 0.
\end{equation}
To see why, note that $e^x - 1 = 0$ if and only if $x = 0$, and \new{so, when $u(k)=0$, the above product is zero, and when $u(k)>0$, we have $q(k+1)=0$, again making the product zero.} Expanding~\eqref{eq:key-lemma} and \new{replacing $q(k+1)$} using the queue evolution~\eqref{eq:queue-evolution}, one obtains after taking steady-state expectations:
\begin{equation}\label{eq:transform-equation-before-independence}
    \mathbb{E}\bigl[e^{\theta\epsilon q}(1-e^{\theta \epsilon(a-s)}) \bigr] = 1 - \mathbb{E}\bigl[e^{-\theta \epsilon u} \bigr].
\end{equation}
where we omitted the dependence on $k$ in steady state. The only properties used so far are stationarity and the complementarity condition, which makes \eqref{eq:transform-equation-before-independence} especially general for a variety of single-server queue models as we will see in the next section. If one further uses that the number of arrivals and potential services are independent \new{of} the queue length, one can rearrange \eqref{eq:transform-equation-before-independence} to obtain
\begin{equation}\label{eq:transform-equation}
\mathbb{E}\bigl[e^{\theta\epsilon q}\bigr] \;=\; \frac{1 - \mathbb{E}\bigl[e^{-\theta\epsilon u}\bigr]}{1 - \mathbb{E}\bigl[e^{\theta\epsilon(a - s)}\bigr]}.
\end{equation}
This is the \emph{transform equation}.  It expresses the moment generating function (MGF) of the scaled queue length in terms of the MGF of the unused service and the net input $a - s$. Equation~\eqref{eq:transform-equation} is remarkable because it is \emph{exact}---it holds for any $\epsilon > 0$, not just in the heavy-traffic limit.  Essentially, the exponential test function \new{handles the nonlinearity of the Lindley recursion through the complementarity condition via \eqref{eq:key-lemma} in an elegant manner.}

\subsubsection*{Step 2: Second-Order Approximation}
We expand the exponential terms using Taylor expansion for small $\epsilon$. Since $\mathbb{E}[a - s] = -\epsilon$ and $\mathrm{Var}(a-s) = \sigma_a^2 + \sigma_s^2$, the denominator becomes
\[
1 - \mathbb{E}\bigl[e^{\theta\epsilon(a-s)}\bigr] = \theta\epsilon^2 - \frac{\theta^2\epsilon^2}{2}(\sigma_a^2 + \sigma_s^2) + o(\epsilon^2).
\]
For the numerator, we use $\mathbb{E}[u]=\epsilon$ and $\mathbb{E}[u^2] = O(\epsilon)$ as discussed in \Cref{sec:kingman}, to get
\begin{equation}
1-\mathbb{E}\bigl[e^{-\theta\epsilon u}\bigr] = \theta\epsilon\,\mathbb{E}[u] - \frac{\theta^2\epsilon^2}{2}\,\mathbb{E}[u^2] + O(\epsilon^3) = \theta \epsilon^2 + O(\epsilon^3) \label{eq:unused_service}
\end{equation}
Substituting these expansions into the transform
equation~\eqref{eq:transform-equation} and dividing the numerator and
denominator by their common leading factor $\theta\epsilon^2$, one arrives at
\begin{equation}\label{eq:approx-mgf}
\mathbb{E}\bigl[e^{\theta\epsilon q}\bigr] \;=\; \frac{1 + o(1)}{1 - \theta\left(\frac{\sigma_a^2 + \sigma_s^2}{2}\right) + o(1)}.
\end{equation}

\subsubsection*{Step 3: Solving the Functional Equation}
For the single-server queue, the final step is straightforward: take $\epsilon \to 0$ in~\eqref{eq:approx-mgf} to obtain
\begin{equation}\label{eq:limiting-mgf}
\lim_{\epsilon \to 0}\mathbb{E}\bigl[e^{\theta\epsilon q}\bigr] \;=\; \frac{1}{1 - \theta\left(\frac{\sigma_a^2 + \sigma_s^2}{2}\right)}.
\end{equation}
\new{By carefully keeping track of the conditions on $\theta$ under which $\mathbb{E}[e^{\theta \epsilon q}] < \infty$, one can show that this equality holds true for $\theta$ in an open interval around the origin.}
The right-hand side is the MGF of an exponential random variable with mean $(\sigma_a^2 + \sigma_s^2)/2$.  Since convergence of MGFs in a neighborhood of the origin implies convergence in distribution \cite[Theorem 9.5 in Section 5]{gut2012probability}, we conclude
\begin{equation}\label{eq:ht-result}
\epsilon q \;\xrightarrow{d}\; \mathrm{Expo}\!\left(\frac{2}{\sigma_a^2 + \sigma_s^2}\right) \qquad \text{as } \epsilon \to 0.
\end{equation}
This is the heavy-traffic result for the discrete-time $G/G/1$ queue.

\paragraph{Existence of the MGF.} Step~1 assumed
$\mathbb{E}[e^{\theta\epsilon q}] < \infty$ in steady state, but this
finiteness must be \emph{established} before the drift-zero condition \eqref{eq:zero_drift}
can be invoked. The standard route is via a \emph{negative drift}
bound: one shows that there exist constants $\delta > 0$ and
$C < \infty$ such that
\begin{equation}
\label{eq:mgf_finite_argument}
\mathbb{E}\!\left[e^{\theta\epsilon q(k+1)} \;\big|\; q(k) = q\right]
- e^{\theta\epsilon q}
\;\le\; -\delta(\theta)\,e^{\theta\epsilon q} + C
\qquad \text{for all } q \ge 0,
\end{equation}
and then appeals to a moment-bound lemma (see \citet[Proposition 6.14]{hajekrandomprocbook}) to conclude $\mathbb{E}[e^{\theta\epsilon q}]
\le C/\delta(\theta) < \infty$ for all $\theta$ such that $\delta(\theta)>0$. \new{Thus, t}he negative-drift
bound certifies that the steady-state MGF is
finite, and also gives the corresponding range of $\theta$\new{, which we can show includes an open interval around the origin}. The transform method requires both steps: first, establish finiteness via a negative drift, then derive the exact transform equation by setting the drift to zero. Throughout the rest of the tutorial, we focus on the latter while only making brief remarks about the former.

\subsection{Empirical Validation}
Figure~\ref{fig:ht-convergence} illustrates the heavy-traffic
convergence~\eqref{eq:ht-result} for two contrasting arrival and service
distributions. The code used to generate these plots is publicly available at \citet{varma2026htsim}. In the Bernoulli case the queue length is a birth--death
chain on the non-negative integers, \new{and} its stationary distribution is
geometric for every $\epsilon > 0$; the heavy-traffic limit then reduces
to a geometric random variable converging to an exponential as
$\epsilon \to 0$, and what one observes empirically is essentially the
discretization gap shrinking with $\epsilon$. The bursty case
underscores the \emph{universality} of the limit: although the arrival
distribution is multi-modal and far from Bernoulli, the empirical
density still \new{converges to} an exponential, and \new{the limit depends on} the arrival and service processes
\new{only through} their first two moments \new{via the expression} $\sigma_a^2 + \sigma_s^2$.
\begin{figure}[ht]
\centering
\begin{minipage}[t]{0.475\textwidth}
\centering
\includegraphics[width=\textwidth]{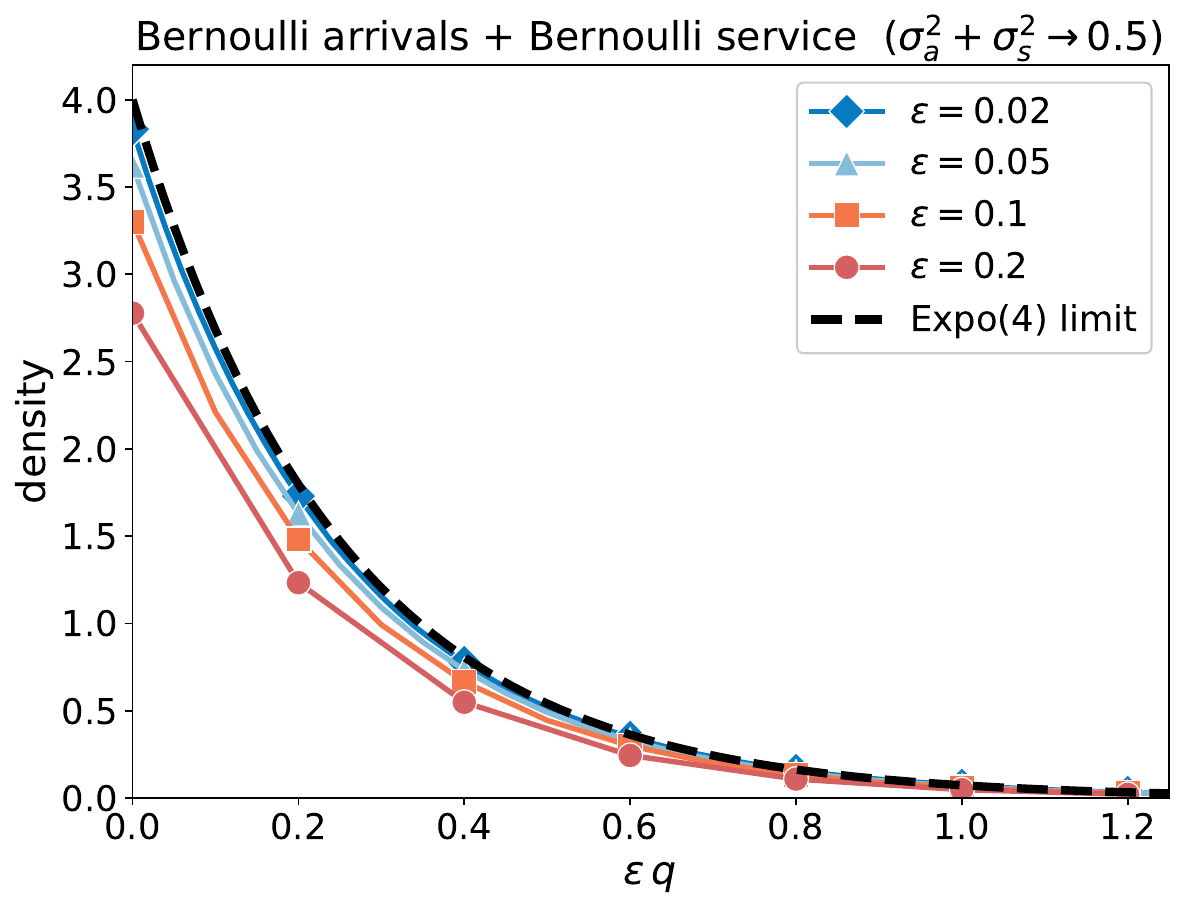}
\end{minipage}\hfill
\begin{minipage}[t]{0.49\textwidth}
\centering
\includegraphics[width=\textwidth]{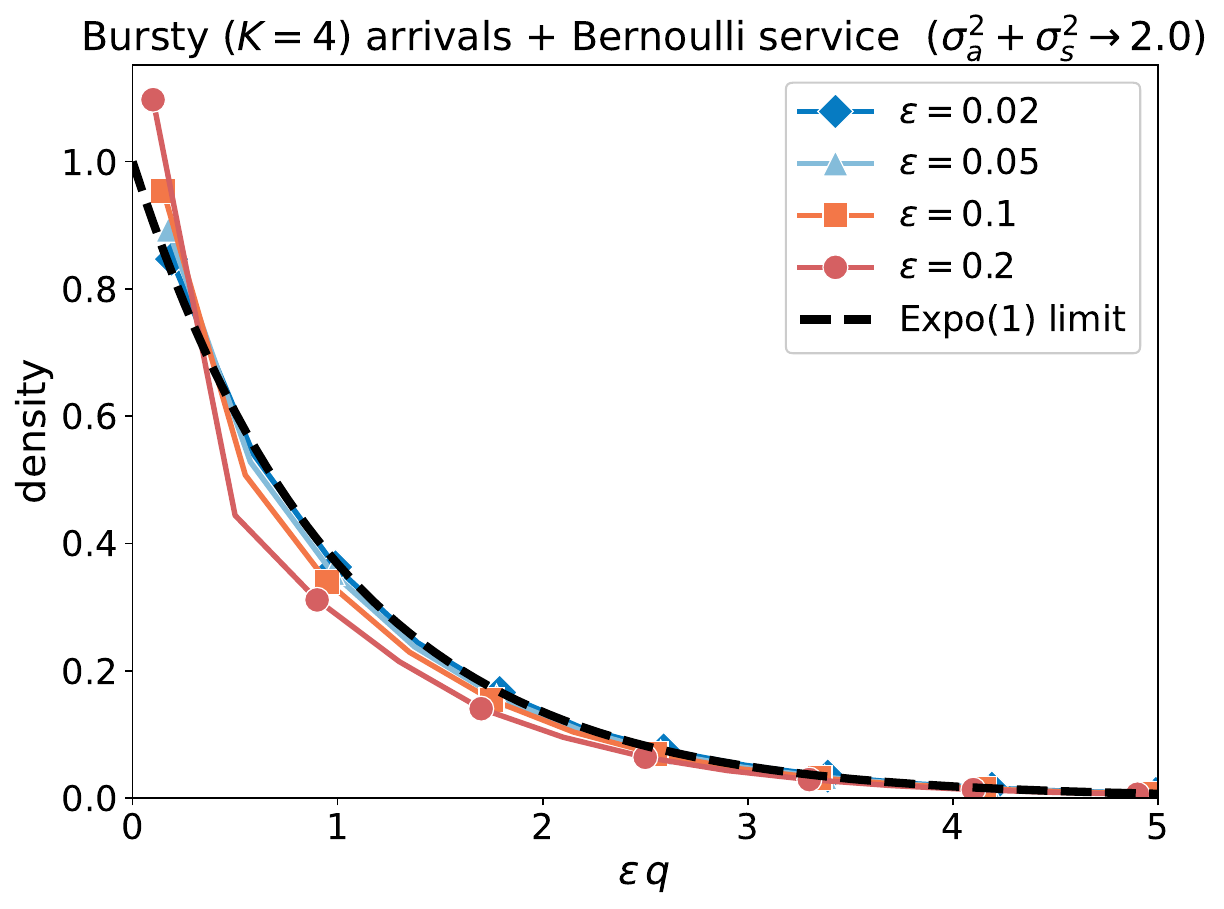}
\end{minipage}
\caption{Empirical density of the scaled queue length $\epsilon q$ in a
discrete-time single-server queue, for $\epsilon \in \{0.2, 0.1, 0.05, 0.02\}$,
compared against the heavy-traffic limit
$\mathrm{Expo}\!\left(2/(\sigma_a^2 + \sigma_s^2)\right)$ from
\eqref{eq:ht-result}. Left: Bernoulli arrivals and service with
$\lambda = 0.5 - \epsilon/2$, $\mu = 0.5 + \epsilon/2$
($\sigma_a^2 + \sigma_s^2 \to 0.5$). Right: bursty arrivals taking value
$4$ with probability $\lambda/4$ and value $0$ otherwise, with Bernoulli
service ($\sigma_a^2 + \sigma_s^2 \to 2.0$). Each curve is computed from a
simulation of length $T = 1.28 \times 10^{8}$ slots ($2.56 \times 10^{8}$
for $\epsilon = 0.02$), with the first half discarded as burn-in.}
\label{fig:ht-convergence}
\end{figure}

\subsection{Transform Method in Continuous Time: The $M/M/1$ Queue}\label{sec:ct-transform-method}
We now illustrate that the transform method adapts naturally to Markovian continuous time systems as well. Consider the $M/M/1$ queue where the times
between successive arrivals and services are exponentially distributed with rates $\lambda$ and $\mu$ respectively. As before, we denote the \emph{heavy-traffic parameter} $\epsilon \;:=\; \mu - \lambda \;>\; 0$. Let $q(t)$ denote the number of jobs in the system. Then, $\{q(t)\}_{t \ge 0}$ is a continuous-time Markov chain (CTMC).
To
emphasize the continuous-time setting, we track time with $t$ rather than $k$.
The three-step structure of the transform method is unchanged from
Section~\ref{sec:transform-method}; only the first step looks different, because the
role played by the one-step drift $e^{\theta\epsilon q(k+1)}-e^{\theta \epsilon q(k)}$ in
discrete time is now played by the \emph{generator} $\mathcal{G}$ of the
continuous-time process.

\subsubsection*{Step 1: Capturing the System Dynamics} The continuous-time analog of the one-step drift is the generator, which simply \new{considers all} the possible transitions of $q(t)$, weighting each by the rate at which it occurs.
Applying the
generator $\mathcal{G}$ to the test function $\varphi(q) = e^{\theta\epsilon q}$ gives
\begin{align}
    \mathcal{G} \varphi(q) &= \lambda\left(e^{\theta\epsilon (q+1)} - e^{\theta \epsilon q}\right) + \mu \mathbbm{1}_{\{q>0\}} \left(e^{\theta\epsilon (q-1)} - e^{\theta \epsilon q}\right), \label{eq:mm1generator}
\end{align}
as a customer arrives $(q \mapsto q+1)$ with rate $\lambda$ and service occurs $(q \mapsto q-1)$ whenever the system is non-empty $(q > 0)$ with rate $\mu$.
The discrete-time stationarity condition \new{$\E{\varphi(q(k+1))} = \E{\varphi(q(k))}$} becomes $\E{\mathcal{G} \varphi(q(t))}=0$.
Using this stationarity property, replacing $\mathbbm{1}_{\{q>0\}}=1-\mathbbm{1}_{\{q=0\}}$, dividing by $1-e^{-\theta \epsilon}$, and rearranging terms in \eqref{eq:mm1generator} yields
\begin{equation}\label{eq:ct-transform-before}
  \E{e^{\theta\epsilon q}\bigl(\mu\,e^{-\theta\epsilon} - \lambda\bigr)}
  \;=\;
  \new{e^{-\theta\epsilon}\,\mu\, \E{\mathbbm{1}_{\{q=0\}}}}.
\end{equation}
Equation~\eqref{eq:ct-transform-before} is the continuous-time counterpart of
the discrete-time identity~\eqref{eq:transform-equation-before-independence}.
The boundary term $\E{\mu\,\mathbbm{1}_{\{q=0\}}}$ on the right-hand side represents the rate of
service capacity lost to idleness, and plays the role of the unused service
$\E{e^{-\theta\epsilon u}}$ in the discrete time setting.
Note that setting $\theta=0$ in \eqref{eq:ct-transform-before} and \new{recalling} that $\E{\mathbbm{1}_{\{q=0\}}}=\mathbb{P}(q=0)$, we obtain
$
  \mu\,\mathbb{P}(q = 0)
  \;=\; \mu - \lambda
  \;=\; \epsilon
$.
In words, in steady state, the rate of lost service equals the excess service capacity.
Substituting $\mathbb{P}(q = 0)$ into~\eqref{eq:ct-transform-before}
and simplifying, we obtain the exact transform of the scaled queue length,
\begin{equation}\label{eq:ct-transform-equation}
  \E{e^{\theta\epsilon q}}
  \;=\;
  \frac{\epsilon}{\mu - \lambda\,e^{\theta\epsilon}},
\end{equation}
which is the continuous-time analog of the
\emph{transform equation}~\eqref{eq:transform-equation}. As in discrete time,
\eqref{eq:ct-transform-equation} is
\emph{exact}: it holds for every $\epsilon > 0$, not just in the
heavy-traffic limit. In fact, the right-hand side is precisely the MGF
of a geometric random variable with parameter $\rho = \lambda/\mu$ \new{(with the parameter $\theta$ replaced by $\theta \epsilon$). This is because it is well-known that the} $M/M/1$ queue has the geometric stationary distribution $\pi_i = (1-\rho)\rho^i$, from which
one could have computed the MGF directly as
$\mathbb{E}[e^{\theta\epsilon q}] = \sum_{i \ge 0} e^{\theta\epsilon i}
\pi_i$. Such a
closed-form-first strategy is the one taken by
\citet{cardinaels2024multi}, and in a similar spirit by
\citet{kingman1961_charfunction, boon2023heavy} when the pre-limit
transform itself is known in closed form.\footnote{\citet{boon2023heavy}
also use the term ``transform method.'' The distinction is that they
start from Pollaczek's formula for the transform and then pass to
heavy traffic, whereas the transform method in this tutorial works
directly with the queue dynamics.} \new{In contrast, the transform method is applicable even in systems where we do not have such closed form expressions, and we will see these in more detail in the next section.}

\subsubsection*{Steps 2 and 3: Heavy-Traffic Limit}
Steps 2 and 3 follow exactly as in the discrete-time queue of
Section~\ref{sec:transform}. Taylor-expanding the denominator
of~\eqref{eq:ct-transform-equation} for small $\epsilon$ and letting
$\epsilon \to 0$ yields $\epsilon q \stackrel{d}{\to}\mathrm{Expo}(1/\mu)$,
the continuous-time analog of the heavy-traffic result~\eqref{eq:ht-result}.
The only change from discrete time is \new{the parameter of the exponential. One way to see that this is the correct expression is to note that the mean queue length of the $M/M/1$ queue is $\lambda/(\mu-\lambda) = \lambda/\epsilon$, and so when multiplied by $\epsilon$, the limiting mean should be $\lambda \to \mu$.}

\subsection{Choice of Transform}\label{sec:choice-transform}
Before applying the transform method, one must decide which exponential test function to use.  Three natural candidates arise, each with distinct trade-offs.
\paragraph{Moment Generating Function (MGF).}  The MGF, $\mathbb{E}[e^{\theta\epsilon q}]$ for $\theta \in \mathbb{R}$, is the two-sided Laplace transform of the stationary distribution.  Its main advantage is that convergence of the MGF in an open interval around the origin implies convergence of \emph{all} moments, in addition to convergence in distribution.  However, one must first \emph{prove} that the MGF exists in a neighborhood of the origin, which requires establishing that the queue length has an exponentially decaying tail.  For the single-server queue, this follows from classical results (or can be established as part of the analysis as in \eqref{eq:mgf_finite_argument}), but in more complex systems this can be a nontrivial prerequisite.
\paragraph{Characteristic Function (CF).}  The CF, $\mathbb{E}[e^{j\omega\epsilon q}]$ for $\omega \in \mathbb{R}$ (where $j = \sqrt{-1}$), is the Fourier transform of the distribution.  Its key advantage is that it \emph{always exists}, \new{no} moment conditions are needed.  The price is that one \new{has to work} with complex \new{valued functions,} and convergence of characteristic functions (by L\'evy's continuity theorem, e.g., see \citet[Chapter 18]{williams1991probability}) gives only convergence in distribution, not convergence of moments.  Nevertheless, the CF is the natural choice when the MGF may not exist, or when the analysis is simpler in the Fourier domain (see Section~\ref{sec:matching-queue}).
\paragraph{One-Sided Laplace Transform.}  Since queue lengths are nonnegative, one can use $\mathbb{E}[e^{\theta\epsilon q}]$ restricted to $\theta \new{\leq} 0$.  This always exists because $e^{\theta\epsilon q} \le 1$ for $\theta \new{\leq} 0$ and $q \ge 0$.  The one-sided Laplace transform avoids the existence issues of the MGF but, like the CF, only yields convergence in distribution. However, like the MGF, one only works with real \new{valued functions}.

In summary, the MGF is the strongest tool when it can be shown to exist, the CF is the most universally applicable, and the one-sided Laplace transform offers a middle ground.

\subsection{From the Transform to Tail Bounds}\label{sec:tail-bounds}
A major advantage of working with the MGF is that it \new{provides bounds on the} tail probabilities. We illustrate this \new{in the case of} the discrete-time single-server queue\new{. Applying} Markov's inequality (the Chernoff bound) directly to the transform equation~\eqref{eq:transform-equation} \new{we} obtain a \emph{non-asymptotic} tail bound. Specifically, Markov's inequality gives
\begin{equation}\label{eq:tail-chernoff}
\mathbb{P}(\epsilon q > x)
\;=\; \mathbb{P}\bigl(e^{\theta\epsilon q} > e^{\theta x}\bigr)
\;\le\; e^{-\theta x}\,\mathbb{E}\bigl[e^{\theta\epsilon q}\bigr],
\end{equation}
which holds for every $\theta$ for which the MGF $\mathbb{E}\bigl[e^{\theta\epsilon q}\bigr]$ is finite. Substituting an appropriate second-order approximation of the terms in the transform equation~\eqref{eq:transform-equation} and optimizing over $\theta$, we obtain an upper bound of the form
\begin{equation} \label{eq:pre_limit_tail}
\mathbb{P}(\epsilon q > x) \;\le\; \underbrace{2ex/\sigma^2}_{\text{pre-exponent}} \;\cdot\; \underbrace{e^{-\theta_0 x}}_{\text{pre-limit tail}}, \quad \forall x > 2/\sigma^2,
\end{equation}
where $\sigma^2=\sigma_a^2+\sigma_s^2$ and $\theta_0 = \frac{2}{\sigma^2}\bigl(1-O(\epsilon)\bigr)$. We refer the readers to \citet{jhunjhun2024exponential} for details on steps of second order approximation required to obtain \eqref{eq:pre_limit_tail}. Taking the heavy-traffic limit $\epsilon \to 0$ in~\eqref{eq:pre_limit_tail} gives
\[
\lim_{\epsilon \to 0}\mathbb{P}(\epsilon q > x) \;\le\; \frac{2ex}{\sigma^2}\, e^{-\frac{2x}{\sigma^2}}, \quad \forall x > 2/\sigma^2,
\]
which recovers the correct tail decay rate of the limiting distribution. Note that this limit \emph{does not} recover the full limiting distribution of~\eqref{eq:ht-result}, owing to the \emph{pre-exponent} term $(2ex/\sigma^2)$ introduced by Markov's inequality. Nonetheless, the exponent of the \emph{pre-limit tail} \new{nearly matches} the $\mathrm{Expo}(2/\sigma^2)$ tail of the limiting distribution \new{(barring the $O(\epsilon)$ term in $\theta_0$)}. Indeed, since the pre-exponent is polynomial while the tail decays exponentially, the bound \new{provides a bound on} the large-deviation rate: $\lim_{x \to \infty} -\frac{1}{x}\log \mathbb{P}(\epsilon q > x) \geq \theta_0$. This provides a bridge between two classical regimes: \emph{heavy-traffic theory} (which studies the behavior as $\epsilon \to 0$) and \emph{large deviations} (which studies the exponential rate of decay of tail probabilities for fixed $\epsilon$). The transform method, by virtue of \new{working with} the MGF, naturally connects these two perspectives. Moreover, we emphasize that \eqref{eq:pre_limit_tail} is a \emph{pre-limit} result\new{, i.e., it} provides an explicit, computable guarantee on the tail probability at any finite $\epsilon > 0$. For practitioners, this means one can \new{guarantee} that a system meets a given service-level agreement (e.g., $\mathbb{P}(\text{delay} > x) \leq \eta$) at a specific, realistic load, without relying on the heavy-traffic approximation. \new{This approach is more formally presented} in~\citet{jhunjhun2024exponential}, where the authors also extend the analysis to load balancing under JSQ (see Section~\ref{sec:jsq-corrected}).

\subsection{Comparison with Other Methods}\label{sec:comparison}
The transform method is one of several ``direct'' methods for obtaining steady-state heavy-traffic results---methods that work directly with the stationary distribution of the pre-limit system.  We now compare it with three other direct methods: the drift method, Stein's method, and the Basic Adjoint Relationship (BAR) approach. Table~\ref{tab:comparison} summarizes the key differences among these four methods. \new{It is interesting to note that the drift method, transform method, and Stein's method are similar in spirit to their counterparts used to prove the central limit theorem. In contrast to the direct methods, classical work is based on a framework based on process level limits, and we briefly discuss that too.}
\subsubsection*{The Drift Method}
The drift method, as briefly outlined in Section~\ref{sec:kingman}, was introduced by~\citet{atilla} and uses \emph{polynomial} test functions.  To bound $\mathbb{E}[q^n]$, one applies the function $\varphi(q) = q^{n+1}$ to the queue evolution equation \eqref{eq:queue-evolution}, sets the steady-state drift to zero, and solves for $\mathbb{E}[q^n]$ in terms of lower moments.  This yields all moments by induction\new{. The main limitation of the drift method is that} each step requires the previous moments as input, making the algebra involved. The drift method applies broadly to Markovian queueing systems and has been used to study generalized switches by \citet{hurtado2020transform, hurtado2022logarithmic, hurtado2022heavy, Hurtado_JSQ_alpha_discrete}, input-queued switch by \citet{MagSri_SSY16_Switch}, parallel-server systems by \citet{zhong2019_process_flexibility, varma2021transportation}, load balancing by \citet{atilla, varma2025power}, bandwidth-sharing networks by \citet{Weina_bandwidth}, systems with Markov-modulated arrivals by \citet{grosof2024analysis}, and matching queues by \citet{varma2023dynamic}.
\subsubsection*{Stein's Method}
Stein's method \new{for stochastic networks was popularized by the works of}~\citet{braverman2017stein2} and \citet{Gurvich2014}. It characterizes the limiting distribution by working with \emph{an entire family of test functions} simultaneously.  The idea is \new{to work with the ``Stein's operator'' of the target distribution, and show that on this family of test functions, the generator of the queueing system is approximated by the Stein's operator.}  The strength of Stein's method is that it yields bounds on the \emph{Wasserstein distance} between the pre-limit and limiting distributions, thus characterizing the rate of convergence.  However, \new{the main challenge is determining the family of test functions. This family is obtained usually by solving the so-called Stein equation or the Poisson equation for the Stein operator. When one considers target distributions beyond the standard ones such as the Gaussian, exponential, etc., solving the Stein equation is challenging.} For recent developments of Stein's method for queueing systems, we refer the reader to \citet{braverman2018stein, gaunt2020stein, braverman2024high, chatterjee2026higher} and the references therein.
\subsubsection*{The BAR Approach}
\new{The BAR approach developed by~\citet{braverman_BAR} studies continuous time queueing systems operating under general interarrival and service time distributions (not necessarily exponential). Keeping track of such interarrival and service times is in general challenging. The BAR approach cleverly addresses those challenges while still working with exponential test functions. The transform method and the BAR method, thus, adopt the same approach of working with exponential test functions, but the BAR method can be more complex since it handles a more general class of arrival and service processes.} The BAR approach has been used to study Jackson networks by \citet{guang2026uniform}, load balancing by \citet{guang2025steady}, and multi-class queueing networks by \citet{braverman2025bar, Dai-2024-multiscale-HT-2, Dai-2026-multiscale-HT}.
\begin{table}
\centering
\TABLE{Comparison of direct methods for steady-state heavy-traffic analysis.
\label{tab:comparison}}{
\renewcommand{\arraystretch}{1.4}
\begin{tabular}{|p{7em}|p{9.5em}|p{8em}|p{8.5em}|p{8.5em}|}
\hline
 \textbf{Method} & \textbf{Drift Method} & \textbf{Transform} & \textbf{BAR} & \textbf{Stein} \\
\hline
\textbf{Test function} & Polynomial& Exponential & Exponential & Large class   \\
\hline
\textbf{Applicability} & Simple systems (mean, higher moments) & Markovian systems (simpler) & General distributions (more involved) & Requires target distribution \\
\hline
\textbf{What we get} & All pre-limit moments & Pre-limit tail and limiting distribution & Limiting distribution &  Rate of Convergence (Wasserstein) \\
\hline
\end{tabular}}{}
\end{table}
\subsubsection*{A Practitioner's Guide}
The four direct methods occupy distinct niches.
If \new{the goal is to bound the} \emph{moments}, the
drift method is the most \new{suitable}. If \new{the goal is to obtain} \emph{rate-of-convergence}
guarantees to the limiting distribution---i.e., quantitative bounds on
how close the finite-$\epsilon$ system is to its asymptotic
behavior---Stein's method is the standard tool. If \new{the goal is to obtain the pre-limit}
\emph{tail bounds}, or the entire \emph{steady-state distribution} \new{in the limit,}
the transform method is the natural choice and is what we develop in
the rest of this tutorial. If \new{one has to} handle general inter-arrival and service distributions, the BAR
approach extends the same exponential-test-function philosophy to
continuous time, with additional technical overhead. The methods are
\new{thus} complementary as much as they are competing.
\subsubsection*{Direct Methods versus Diffusion Limits}
It is instructive to contrast all four direct methods with the classical \emph{diffusion limit} approach.  This approach (e.g., see \citet{harrison1985brownian, whitt2002stochastic}) operates at the \emph{process level}\new{.} One shows that the sequence of scaled queue-length processes $\{q^{(\epsilon)}(k)\}$ converges weakly to a \new{diffusion process such as the} reflected Brownian motion (RBM) as $\epsilon \to 0$, and then characterizes the stationary distribution of the RBM.  This involves two limits\new{, viz.,} a time limit ($k \to \infty$) and a heavy-traffic limit ($\epsilon \to 0$).

\vspace{1em}
\noindent
\begin{minipage}[c]{0.55\textwidth}
Schematically, consider the following diagram. Let $q^{(\epsilon)}(k)$ denote the queue-length process with heavy-traffic parameter $\epsilon$ at time $k$, and let $q^{(\epsilon)}(\infty)$ denote its steady-state distribution. The goal is to compute the distribution \new{of} $q^{(\epsilon)}(\infty)$ \new{or its approximation as $\epsilon \to 0$.}
\end{minipage}%
\hfill
\begin{minipage}[c]{0.42\textwidth}
\[
\begin{array}{ccc}
q^{(\epsilon)}(k) & \xrightarrow{\;\epsilon \to 0\;} & q^{(0)}(k) \;\;\text{(RBM)}\\[8pt]
\Big\downarrow\; k \to \infty & & \Big\downarrow\; k \to \infty \\[8pt]
q^{(\epsilon)}(\infty) & \xrightarrow{\;\epsilon \to 0\;} & q^{(0)}(\infty)
\end{array}
\]
\end{minipage}
\vspace{1em}

The diffusion limits approach goes across the top and then down (e.g., see \citet{reiman1984open} for Jackson network and \citet{williams1998diffusion} for multi-class network)\new{, i.e.,} first take $\epsilon \to 0$ to get a diffusion process, then take $k \to \infty$ to get its stationary distribution. However, \new{since we are interested in the convergence of $q^{(\epsilon)}(\infty)$ to $q^{(0)}(\infty)$, one has to establish an interchange of limits, which is usually challenging to do} (\citet{gamarnik2006validity, budhiraja2009stationary}).

The direct methods (drift, Stein, BAR, transform) instead go down first and then across\new{, i.e.,} first take $k \to \infty$ to work with the stationary distribution at finite $\epsilon$, then take $\epsilon \to 0$.  The advantage of the direct route is that it \new{is usually simpler,} avoids the interchange-of-limits issue and can often provide \emph{pre-limit bounds} that hold for every $\epsilon > 0$.  The transform method, in particular, produces the exact transform equation at finite $\epsilon$, from which both the limiting distribution and non-asymptotic tail bounds can be extracted.

\subsection{Summary and Preview}\label{sec:preview}
The transform method, as illustrated here for the single-server queue, consists of three steps:
\begin{enumerate}
    \item \textbf{Capture the system dynamics} via the steady-state drift of an exponential test function, exploiting the complementarity condition to derive an exact transform equation
    \item Perform a \textbf{second-order approximation} of the transform equation for small $\epsilon$
    \item \textbf{Solve the resulting functional equation} in the heavy-traffic limit
\end{enumerate}
For the single-server queue, Step~3 is trivial (just take $\epsilon \to 0$), but as we will see in subsequent sections \new{that} it becomes significantly more involved in richer settings. The power of the method lies in its ability to produce the full distributional characterization directly from the queue dynamics, without requiring knowledge of the pre-limit stationary distribution \new{and without having to establish a} process-level diffusion limit.  Moreover, the exact pre-limit transform equation naturally yields non-asymptotic tail bounds, connecting the heavy-traffic and large deviations regimes.

\new{The method has been extended along two natural dimensions, where the three-step recipe is universal, but the difficulty shifts across settings. In the first dimension, presented in Section~\ref{sec:single-server-variants} and summarized in Table~\ref{tab:single-server}, we work with the single-server model but enrich it with features such as customer abandonments, Markov-modulated arrivals, and state-dependent arrivals. In these models, the main challenge is that the richer system dynamics lead to more involved functional equations, requiring new techniques to establish the transform in closed form. The second dimension, presented in Section~\ref{sec:networks} and summarized in Table~\ref{tab:networks}, moves from a single queue to \emph{stochastic networks} involving multiple interacting queues. Here, the main new ingredient is \emph{state space collapse} (SSC). The high-dimensional queue-length vector concentrates near a lower-dimensional subspace in heavy traffic, and the transform method must be applied to the process projected onto the lower-dimensional subspace. In these networks, both Steps~1 and~3 become substantially harder (establishing state space collapse and solving multi-dimensional functional equations, respectively).}

\begin{table}
\centering
\TABLE{Transform method applied to single-server queue variants (Section~\ref{sec:single-server-variants}).
\label{tab:single-server}}{
\renewcommand{\arraystretch}{1.4}
\begin{tabular}{|>{\raggedright\arraybackslash}p{7.2em}
               |>{\raggedright\arraybackslash}p{11em}
               |>{\raggedright\arraybackslash}p{13em}
               |>{\raggedright\arraybackslash}p{10.8em}|}
\hline
\textbf{Variant} & \textbf{New Challenge} & \textbf{New Technique (Step 3)} & \textbf{Limiting distribution} \\
\hline
State-dependent  & Arrival/Service rate depends on queue length & Solve functional equation via inverse Fourier transforms & Gibbs (depending on state-dependent control) \\
\hline
Markov-modulated & Arrival/Service rate is driven by a Markov chain & Poisson equation for the modulating chain & Exponential (with effective variance) \\
\hline
Abandonments & Customers abandon at rate $\gamma$ & Establish and solve an ODE in terms of the MGF & Exponential / Truncated Gaussian depending on $\gamma$ \\
\hline
Pre-limit (finite $\epsilon$) tail bounds  & Non-asymptotic pre-limit analysis & Tight approximation to obtain multiplicative error & Exponential decay with explicit rate $\eta(\epsilon)$ \\
\hline
\end{tabular}}{}
\end{table}

\begin{table}[ht]
\TABLE{Brief Summary of Results using Transform Method applied to Multi-Dimensional Stochastic Networks (Section~\ref{sec:networks}).
\label{tab:networks}}{
\renewcommand{\arraystretch}{1.4}
\begin{tabular}{|>{\centering\arraybackslash}p{3cm}
               |>{\centering\arraybackslash}p{3.8cm}
               |>{\centering\arraybackslash}p{4.4cm}
               |>{\centering\arraybackslash}p{4.1cm}|}
\hline
 & \textbf{Load Balancing: Many Server}
 & \textbf{Load Balancing: Abandonment}
 & \textbf{Input-Queued-Switch: Multiple Bottleneck} \\
\hline
\textbf{Regime}
 & Many-Server heavy traffic
 & Critically Loaded and Heavily Overloaded
 & Classic Heavy Traffic \\
\hline
\textbf{SSC result (Step 1)}
 & A tight $O(1)$ bound on queue imbalance
 & Under \new{a} combination of load balancing \& abandonment
 & Under a large class of low-complexity algos \\
\hline
\textbf{Functional Equation (Step 3)}
 & Non-asymptotic MGF bound
 & Differential equation in terms of MGF
 & Implicit (or joint) Eq.\ with multiple variables \\
\hline
\textbf{Tail Bound / Distribution}
 & Expo. distribution and sharp tail bound
 & Normal or truncated normal distribution
 & Non-linear combination of i.i.d.\ exponentials \\
\hline
\end{tabular}}{}
\end{table}

We close with a brief note on limitations\new{.} The transform method is most naturally suited
to Markovian dynamics, \new{i.e., discrete time systems with independent or Markovian arrivals and continuous time systems with exponential inter-arrival and service times. When one has continuous time systems with general distributions, one has to use the} BAR method
(Section~\ref{sec:comparison})\new{. In} multi-bottleneck networks, Step~3
produces multi-dimensional functional equations whose uniqueness can be
hard to establish, as we discuss in Section~\ref{sec:networks}.

\section{Single-Server Queue Variants}\label{sec:single-server-variants}
This section develops three variants of the single-server queue, each illustrating a distinct challenge that arises \new{due to the} richer dynamics\new{, and how the basic three step procedure of the transform method can be adapted.}

\subsection{The Single-Server Queue with Abandonment}\label{sec:abandonment}
In many service systems, customers grow impatient while waiting in the queue and \emph{abandon} before being served.  \new{For example,} callers hang up in call centers, patients leave emergency departments, and riders cancel ride-hail requests\new{. Abandonment} fundamentally changes the queue-length behavior. In this section, we apply the transform method to a variant of the classical single-server queue that incorporates customer abandonment. This model has also been studied using the diffusion approximation approach by \citet{ward2003diffusion, ward2005diffusion}. We also refer the reader to \citet{garnett2002designing, zeltyn2005call, reed2012hazard} for diffusion approximation of \new{abandonment queues} and its generalizations, and \citet{ward2012asymptotic} for a survey.
\subsubsection*{What's new here (Preview).} In this section, we
will see that \new{when we use the transform method,} the MGF no
longer has the closed form \eqref{eq:approx-mgf}\new{. Instead, we solve an ordinary differential equation to obtain the limiting MGF.}

\subsubsection*{Model Description}
Consider the discrete-time single-server queue from Section~\ref{sec:transform}, augmented with customer abandonment.  At each time slot $k$, arrivals $a(k)$ are i.i.d.\ with mean $\lambda$ and variance $\sigma_a^2$, and the server can serve $s(k)$ customers with $\mathbb{E}[s(k)] = \mu$ and $\mathrm{Var}[s(k)] = \sigma_s^2$.  The \new{main difference now} is that each customer present in the queue at time $k$ independently abandons with probability $\gamma \in (0,1)$.  The total number of abandonments is therefore
\begin{equation*}
d(k) \;\sim\; \mathrm{Bin}\bigl( q(k),\, \gamma\bigr),
\end{equation*}
a binomial random variable with parameters $q(k)$ and $\gamma$.  The queue length evolves according to
\begin{align}
\label{eq:queue-abandon}
q(k+1) = \; \bigl[q(k) + a(k) - s(k) - d(k)\bigr]^+ = \; q(k) + a(k) - s(k) - d(k) + u(k),
\end{align}
where we introduce the unused service $u(k) \ge 0$ as before,
with the complementarity condition $q(k+1) \cdot u(k) = 0$. A crucial difference from the classical queue is that the system with abandonment is \emph{always stable}, for any arrival rate $\lambda$ and any \new{positive} abandonment probability $\gamma > 0$.  The expected number of abandonments grows linearly with the queue length (i.e., $\mathbb{E}[d(k)] = \gamma q(k)$), as a result, \new{as the arrival rate increases,} the decrease in the queue length due to abandonment dominates the increase due to arrivals, leading to stability. Consequently, there is no requirement that $\lambda < \mu$; the system can be \emph{overloaded} ($\lambda > \mu$) and \new{it is still stable, and has a} unique stationary distribution.

\subsubsection*{Heavy-Traffic Scaling and Regimes}
Without abandonment, the heavy-traffic parameter is $\epsilon = \mu - \lambda$ and the natural scaling is $\epsilon q$.  With abandonment, the scaling changes because abandonment provides a \new{negative drift} of order $\gamma q$, which competes \new{with the drift $-\epsilon = -(\mu - \lambda)$}. Thus, the rate at which $\epsilon$ and $\gamma$ tend to zero governs the limiting distribution of the appropriately scaled queue length. More precisely, depending on the ratio $C := -\epsilon / \sqrt{\gamma}$, we observe three distinct regimes, and our results are \new{presented in} Table~\ref{tbl: dist}.
\begin{table}[ht]
\TABLE{Limiting steady-state distribution of the scaled queue length vector in the three regimes, as the abandonment probability $\gamma$ \new{and/or the heavy traffic parameter $\epsilon$} goes to zero. Here, HT stands for heavy traffic.
      \label{tbl: dist}}{
     \small\addtolength{\tabcolsep}{-3pt}
     \begin{tabular}{| P{3cm} | P{9cm} | P{3.2cm} |}
     \hline
     \rule{0pt}{11pt}
     Classic-HT
     & Critical-HT   &  Heavily Overloaded   \\
     \hline
     \rule{0pt}{12pt}
     $C\rightarrow - \infty, \epsilon \rightarrow 0$ & (a) $C< 0$, (b) $C=0$, (c) $C>0$ & $C\rightarrow\infty$ \\
     \hline
      $\epsilon q \stackrel{d}{\rightarrow}$ & $\sqrt{\gamma}q \stackrel{d}{\rightarrow}$ & $\sqrt{\gamma}(q-\mathbb E[q]) \stackrel{d}{\rightarrow}$ \\
      \hline
      \rule{0pt}{45pt}
     \includegraphics[width=0.16\textwidth]{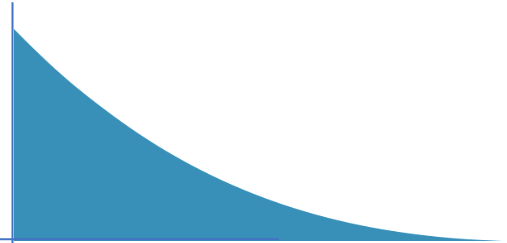}
      & \includegraphics[width=0.16\textwidth]{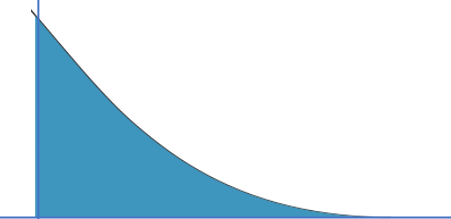} \includegraphics[width=0.16\textwidth]{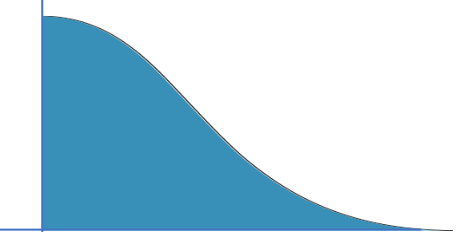} \includegraphics[width=0.16\textwidth]{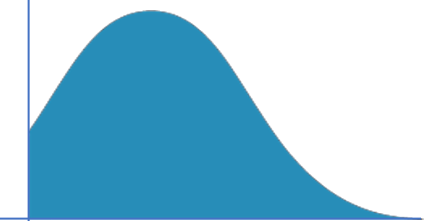}
      &
      \includegraphics[width=0.16\textwidth]{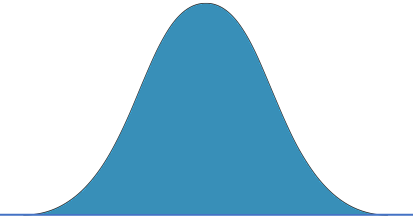}
      \\ $\text{Exponential}$ & $\text{Truncated-Normal}$ & $\text{Normal}$ \\ \hline
      \end{tabular}}{We refer the reader to \citet{jhunjhunwala2026jsqa} for formal statement of these results.}
\end{table}
\new{These} results formalize a phase transition in the limiting scaled queue length from exponential to a Normal distribution as the system moves from underloaded $(C < 0)$ to overloaded $(C > 0)$. The Critical-HT regime in the middle interpolates between the exponential (when $C \to -\infty$) and the Gaussian (when $C \to +\infty$)\new{, via a truncated Gaussian}. We focus our analysis on \new{the critical-HT} regime.
\subsubsection*{Step 1: Capturing the System Dynamics}
As in the classical case, we apply the exponential test function $e^{\sqrt{\gamma}\theta q}$ (note the $\sqrt{\gamma}$ scaling) and use the complementarity condition to obtain:
\begin{equation}\label{eq:abandon-transform}
\mathbb{E}\bigl[e^{\sqrt{\gamma}\theta(a - s)} - 1\bigr]\, \mathbb{E}\bigl[e^{\sqrt{\gamma}\theta  q}\bigr] \;+\; \mathbb{E}\bigl[(e^{-\sqrt{\gamma} \theta  d} - 1)\, e^{\sqrt{\gamma} \theta  q}\bigr] \;+\; \bigl(1 - \mathbb{E}\bigl[e^{-\sqrt{\gamma} \theta  u}\bigr]\bigr) \;=\; o(\gamma).
\end{equation}
Without abandonment ($d = 0$), this reduces to the classical transform equation of Section~\ref{sec:transform-method}.  The \emph{abandonment term} $\mathbb{E}[(e^{-\sqrt{\gamma} \theta  d} - 1)\, e^{\sqrt{\gamma} \theta  q}]$ is new\new{, and will be handled in the next step. Note that since we are working with the MGF, we need to show that it exists for $\theta$ in an open interval around the origin. This is done using novel Lyapunov drift arguments in \citet{jhunjhunwala2026jsqa}, overcoming the challenge due to overloaded arrivals, i.e., $\lambda > \mu$.}
\subsubsection*{Step 2: Second-Order Approximation}
Each of the three terms in~\eqref{eq:abandon-transform} is now approximated for small $\gamma$. In \eqref{eq:abandon-transform}, for the arrival--service term (the first term) and the idleness term (the third term), we use a similar approximation as in Section~\ref{sec:transform-method}, that is
\begin{align*}
\frac{1}{\gamma}\mathbb{E}\bigl[e^{\sqrt{\gamma}\,\theta(a-s)} - 1\bigr] \;\approx\; C\theta + \tfrac{1}{2}\theta^2\sigma^2, && \frac{1}{\gamma}\bigl(1 - \mathbb{E}[e^{-\sqrt{\gamma} \theta  u}]\bigr) \approx \frac{\theta}{\sqrt{\gamma}}\,\mathbb{E}[u].
\end{align*}
The main difference here lies in the approximation of the abandonment term. Since $d \sim \mathrm{Bin}(q, \gamma)$, we have $\mathbb{E}[d \mid q] = \gamma q$, so,
\[
\mathbb{E}\bigl[(e^{-\sqrt{\gamma} \theta  d} - 1)\, e^{\sqrt{\gamma} \theta  q}\bigr] \;\approx\; -\mathbb{E}\bigl[\sqrt{\gamma} \theta  d\, e^{\sqrt{\gamma} \theta  q}\bigr] \;=\; -\sqrt{\gamma} \theta \mathbb{E}\bigl[\gamma q\, e^{\sqrt{\gamma} \theta  q}\bigr] \;=\; -\gamma\theta\,\frac{d}{d\theta}\mathbb{E}\bigl[e^{\sqrt{\gamma} \theta  q}\bigr],
\]
where the second equality used the tower property, and in the last step we used \new{$\frac{d}{d\theta } \mathbb{E}[e^{\sqrt{\gamma} \theta  q}] =\sqrt{\gamma}\, \mathbb{E}[q\, e^{\sqrt{\gamma} \theta  q}]$}. Denoting $M(\theta) := \lim_{\gamma \to 0} \mathbb{E}[e^{\sqrt{\gamma}\theta  q}] $ \new{(after showing} that the limit exists) collecting all terms, and dividing by $\gamma \theta$ in \eqref{eq:abandon-transform}, we arrive at the \emph{differential equation}
\begin{equation}
\label{eq:mgf-ode}
\left(C + \tfrac{1}{2}\theta \sigma^2\right) M(\theta) \;-\; \frac{dM(\theta)}{d\theta} \;+\; \frac{1}{\sqrt{\gamma}}\,\mathbb{E}[u] \;=\; o(1).
\end{equation}
\subsubsection*{Step 3: Solving the Differential Equation}
As we take $\gamma\rightarrow0$, \new{the right hand side of} \eqref{eq:mgf-ode} \new{vanishes, and we get a} first-order linear ODE in terms of $M(\theta)$, where $\lim_{\gamma\rightarrow 0 }\frac{1}{\sqrt{\gamma}}\,\mathbb{E}[u]$ acts as an unknown constant. We solve for it using the boundary conditions $M(0) = 1$ (since $M$ is an MGF) and $M(\theta) \to 0$ as $\theta \to -\infty$ (since the limiting distribution is supported on $[0, \infty)$ and the MGF must decay for negative $\theta$). This is in contrast with the single-server queue (without abandonment), where the boundary conditions are not required. Now, the solution of ODE in \eqref{eq:mgf-ode}, subject to the boundary conditions, yields the MGF of a \emph{truncated normal} distribution\new{,}
\begin{equation}\label{eq:truncated-normal}
\sqrt{\gamma} q \;\xrightarrow{d}\; \mathrm{TruncNormal}(C,\, \sigma^2) \qquad \text{as } \gamma \to 0,
\end{equation}
where $\mathrm{TruncNormal}(C, \sigma^2)$ denotes a normal random variable with mean $C$ and variance $\sigma^2$, truncated (and normalized) to the nonnegative half-line $[0, \infty)$. The formal proof with rigorous justification of all the above approximations can be found in~\citet{jhunjhunwala2026jsqa}.

\subsection{Markov-Modulated Arrivals and the Poisson Equation}\label{sec:markov-modulated}
In many real-world service systems, arrivals and service rates are not \new{stationary. In call centers, ride hailing systems, data centers, etc., demand patterns shift with time of the day, weather, and special events. All of these can be captured by a Markov chain, which can be modeled to modulate the arrival and/or service rates. Traffic in communication networks is also known to be bursty, and} is naturally modeled by Markov-modulated processes (\citet{fischer1993markov}). This section deals with a Markov-modulated single-server queue in continuous time, extending the $M/M/1$ queue analysis in Section~\ref{sec:ct-transform-method}.
\subsubsection*{What's new here (Preview).} The arrival and service rates have temporal correlations as they are driven by an exogenous Markov chain, so the queue length
and the environment evolve jointly. The \new{main challenge in the transform method now is that the}
transform equation no longer factors as in
Section~\ref{sec:transform}. We address this with the \new{use of the}
\emph{Poisson equation}, a classical object from Markov chain theory
(see, e.g., \citet{glynn2023solution, makowski2002poisson,
meyn2009markov}) that \new{enables us to handle the temporal correlations, eventually leading to an}
\emph{effective variance} replacing $\sigma_a^2 + \sigma_s^2$ in the
heavy-traffic limit.
\begin{DTonly}
%% ----- Model Description -----
\subsubsection{Model Description}
Consider a single-server queue operating in a random environment.
Let $\{X(k)\}_{k\ge 0}$ be an ergodic Markov chain on a countable state space
$\mathcal{X}$ with transition matrix $P = [P_{ij}]_{i,j\in\mathcal{X}}$ and stationary distribution
$\pi$.  Conditioned on the environment state $X(k) = i$, the number of arrivals
$a(X(k), k)$ and the service capacity $s(X(k), k)$ at time $k$ are independent (given $X(k)$) random variables
with means $\lambda(i)$, $\mu(i)$ and variances $\sigma^2_{a}(i)$, $\sigma^2_{s}(i)$, respectively and finite support.   The long-run
average rates are
\[
  \lambda \;=\; \sum_{i\in\mathcal{X}} \pi_i\,\lambda_i, \qquad
  \mu \;=\; \sum_{i\in\mathcal{X}} \pi_i\,\mu_i,
\]
with $\epsilon := \mu - \lambda > 0$ governing the proximity to heavy traffic. The queue evolves according to the Lindley-type recursion
\begin{equation}\label{eq:mm-lindley}
  q(k+1) \;=\; \bigl[q(k) + a\bigl(X(k), k\bigr) - s\bigl(X(k), k\bigr)\bigr]^+,
\end{equation}
We also assume that $\sum_{i \in \mathcal{X}} \pi_i (\lambda_i-\mu_i)^2 = O(\epsilon)$. This is exactly the single-server queue of Section~2, except that the arrival and service
distributions \emph{change from slot to slot} according to the state of the modulating
chain~$X$. As in Section~\ref{sec:model}, the queue evolution can be written as
\begin{equation}\label{eq:mm-evolution}
  q(k+1) \;=\; q(k) + a\left(X(k), k\right) - s\left(X(k), k\right) + u(X(k), k),
\end{equation}
where $u(X(k), k) \ge 0$ is the unused service and the complementarity condition
$q(k+1)\cdot u(X(k), k) = 0$ holds.
This model is the discrete time version of \citet{HL-Gro-2026-Markov-Modulated}. Working in discrete time does not conceptually change the proofs; we provide more details of the intermediate steps whenever there are differences from the continuous-time system.
%% ----- Why standard i.i.d.\ analysis fails -----
\subsubsection{Why Standard i.i.d.\ Analysis Fails}
In the i.i.d.\ setting, the key factorization in steady state
$\E{e^{\theta\epsilon(a-s)}\,e^{\theta\epsilon q}} = \E{e^{\theta\epsilon(a-s)}}\,\E{e^{\theta\epsilon q}}$
relies on the independence of $a(k) - s(k)$ from $q(k)$.
With Markov modulation this breaks down: since $a(k)$ and $s(k)$ depend on $X(k)$,
and $X(k)$ is correlated with $q(k)$ through the joint chain $(q(k), X(k))$, the drift
calculation produces cross-terms $\E{e^{-s\epsilon q}\,f(X)}$ that cannot be factored.
In other words, we cannot solve for the MGF in~\eqref{eq:transform-equation-before-independence} and need a new toolbox.
%% ----- The Poisson Equation -----
\subsubsection{The Poisson Equation}
The Poisson equation is the key tool we will rely on.
\begin{definition}[Poisson equation]\label{def:poisson}
Given a function $f : \mathcal{X} \to \mathbb{R}$ with $\bar{f} := \bE_\pi\left[f(X)\right] = \sum_{i\in\mathcal{X}} \pi_i f(i)$,
the \emph{Poisson equation} for $f$ asks for a function $V_f : \mathcal{X} \to \mathbb{R}$ satisfying
\begin{equation}\label{eq:poisson}
  V_f(i) - \sum_{j\in\mathcal{X}} P_{ij}\, V_f(j) \;=\; f(i) - \bar{f},
  \qquad \forall\, i \in \mathcal{X},
\end{equation}
where we define $V_f(i^+) := \sum_{j\in\mathcal{X}} P_{ij}\, V_f(j).$
Equivalently $(I - P)\,V_f = f - \bar{f}$.
\end{definition}
The solution $V_f$ is unique up to additive constants and, intuitively, characterizes
the total expected future deviation of $f(X)$ from its mean, starting from state~$i$. The central role of the Poisson equation is captured by the following lemma, whose proof
we defer to the appendix.
\begin{lemma}[Poisson lemma]\label{lem:poisson}
Let $f : \mathcal{X} \to \mathbb{R}$ be such that the Poisson equation for $f$ has a solution
$V_f : \mathcal{X} \to \mathbb{R}$ with $\E{|V_f(X)|^{1+\xi}} < \infty$ for some $\xi > 0$.
Then, for all $\theta < 0$,
\begin{equation}\label{eq:poisson-lemma}
  \E{e^{\theta\epsilon q}\, f(X)}
  \;=\;
  \E{e^{\theta\epsilon q}}\,\bar{f}
  \;-\;
  \theta\epsilon\,\E{e^{\theta\epsilon q}\, V_f(X^+)\,(\mu(X) - \lambda(X))}
  \;+\;
  O\!\Bigl(\epsilon^{1+\frac{\xi}{1+\xi}}\Bigr),
\end{equation}
where we omit the time dependence on $k$ for steady-state random variables.
\end{lemma}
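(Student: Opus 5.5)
The plan is to use the Poisson equation to turn the environment-dependent weight $f(X)$ into a one-step increment of $V_f$ along the chain. Stationarity then converts that increment into the increment of the exponential test function, and the earlier queueing identities handle the rest. Throughout, $\theta<0$, so $e^{\theta\epsilon q}\le 1$ and every expectation below is finite given $\E{|V_f(X)|}<\infty$. I also use two modelling facts: the environment is exogenous, so $X(k+1)$ is conditionally independent of $(q(k),a(k),s(k))$ given $X(k)$; and $a,s$ are bounded.

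\textbf{Step 1 (Poisson rewriting).} Write $X=X(k)$ and $X^+=X(k+1)$. By \eqref{eq:poisson} and the conditional independence, $\E{V_f(X^+)\mid q(k),X(k)}=\sum_j P_{X j}V_f(j)$. Hence
\[
\E{e^{\theta\epsilon q}f(X)} = \bar f\,\E{e^{\theta\epsilon q}} + \E{e^{\theta\epsilon q(k)}V_f(X(k))} - \E{e^{\theta\epsilon q(k)}V_f(X(k+1))}.
\]
The pair $(q(k),X(k))$ is stationary, so the middle term equals $\E{e^{\theta\epsilon q(k+1)}V_f(X(k+1))}$. The correction therefore becomes
\[
\E{\bigl(e^{\theta\epsilon q(k+1)}-e^{\theta\epsilon q(k)}\bigr)V_f(X^+)}.
\]

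\textbf{Step 2 (use complementarity).} Expanding the key identity \eqref{eq:key-lemma} with \eqref{eq:mm-evolution} gives the exact relation
\[
e^{\theta\epsilon q(k+1)} = e^{\theta\epsilon (q+a-s)} + 1 - e^{-\theta\epsilon u}.
\]
The correction therefore splits into two pieces:
\[
\E{e^{\theta\epsilon q}\bigl(e^{\theta\epsilon(a-s)}-1\bigr)V_f(X^+)} \quad\text{and}\quad \E{\bigl(1-e^{-\theta\epsilon u}\bigr)V_f(X^+)}.
\]
For the first piece, Taylor expand $e^{\theta\epsilon(a-s)}-1=\theta\epsilon(a-s)+R$ with $|R|\le C\epsilon^2$, using boundedness of $a,s$. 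Since $a,s$ are conditionally independent of $X^+$ and $q$ given $X$, the linear term becomes $\theta\epsilon\,\E{e^{\theta\epsilon q}(\lambda(X)-\mu(X))V_f(X^+)}$, which is exactly the middle term of \eqref{eq:poisson-lemma} with the correct sign. The remainder is bounded by $C\epsilon^2\E{|V_f(X^+)|}=O(\epsilon^2)$.

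\textbf{Step 3 (unused service; main obstacle).} The unused-service piece is the delicate one, because $u$ is correlated with $X^+$ and we only know $\E{u}$ in aggregate. Taking stationary expectations in \eqref{eq:mm-evolution} gives $\E{u}=\epsilon$. Using $|1-e^{-\theta\epsilon u}|\le|\theta|\epsilon u$, then H\"older with exponents $1+\xi$ and $(1+\xi)/\xi$, and then $0\le u\le S_{\max}$, I get
\[
\Bigl|\E{(1-e^{-\theta\epsilon u})V_f(X^+)}\Bigr| \le |\theta|\epsilon\,\E{|V_f(X^+)|^{1+\xi}}^{\frac{1}{1+\xi}}\,\E{u^{\frac{1+\xi}{\xi}}}^{\frac{\xi}{1+\xi}} \le C\epsilon\,\bigl(S_{\max}^{1/\xi}\,\epsilon\bigr)^{\frac{\xi}{1+\xi}} = O\bigl(\epsilon^{1+\frac{\xi}{1+\xi}}\bigr).
\]
Here $X^+$ has law $\pi$, so the moment of $V_f$ is the assumed finite one. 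This is precisely the stated error term, and it dominates the $O(\epsilon^2)$ remainder from Step 2.

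Collecting Steps 1–3 yields \eqref{eq:poisson-lemma}. The points to verify carefully are the conditional-independence structure used in Steps 1 and 2, and that $\theta<0$ legitimately removes any need for a separate existence argument for the MGF.
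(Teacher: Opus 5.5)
Your argument is correct and is essentially the paper's proof: your stationarity shift of $\E{e^{\theta\epsilon q(k)}V_f(X(k))}$ is exactly the paper's zero-drift condition for $e^{\theta\epsilon q}V_f(X)$, after which both proofs use the complementarity identity to split into a Taylor-expanded arrival--service term and an unused-service term bounded by H\"older with the same exponents. One small slip: for $\theta<0$ the exponent $-\theta\epsilon u$ is nonnegative, so $|1-e^{-\theta\epsilon u}| = e^{|\theta|\epsilon u}-1 \ge |\theta|\epsilon u$, which reverses your inequality; the correct bound is $|1-e^{-\theta\epsilon u}| \le e^{|\theta|\epsilon S_{\max}}\,|\theta|\epsilon u$ (the paper's $C|\theta\epsilon u|$), and the extra constant is absorbed by the $O(\cdot)$.
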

The first term on the right-hand side is the ``decoupled'' approximation (as if $f(X)$ and $q$ were
independent), and the second is a correction of order~$\epsilon$ that captures the
covariance between the environment and the queue through the Poisson solution~$V_f$.
%% ----- Step 1: Transform Equation -----
\subsubsection{Extension of the Transform Method}
In this case, we work with the one-sided Laplace transform variable $\theta < 0$
and set the drift of $\varphi(q,X) = e^{\theta\epsilon q}$ to zero in steady state.
\subsubsection*{Step 1: Capturing the System Dynamics}
Using the complementarity condition $(e^{\theta\epsilon q(k+1)} - 1)(e^{-\theta\epsilon u(k)} - 1) = 0$, taking expectations in steady state over the joint distribution of $(q, X)$ and rearranging terms we obtain the transform equation:
\begin{equation}\label{eq:mm-transform}
  \E{e^{\theta\epsilon q}\bigl(1 - e^{\theta\epsilon(a(X) - s(X))}\bigr)}
  \;=\; 1 - \E{e^{-\theta\epsilon u}}.
\end{equation}
Note the
dependence on $X$ for $a(X) - s(X)$: the arrival and service distributions depend on the
state of the environment, so the left-hand side involves cross-terms between the queue
length and functions of the Markov chain.
We now expand the transform equation~\eqref{eq:mm-transform} to second order
in~$\epsilon$.
\subsubsection*{Step 2: Second Order Approximation}
Similar to \eqref{eq:unused_service} for the single-server queue setting, we can simplify the right-hand side of~\eqref{eq:mm-transform} to get
\begin{equation}\label{eq:mm-rhs}
  1 - \E{e^{-\theta\epsilon u}} \;=\; \theta\epsilon^2 + O(\epsilon^3).
\end{equation}
Next, Taylor-expanding $e^{\theta\epsilon(a(X) - s(X))}$ on the left-hand side
of~\eqref{eq:mm-transform} and using the tower property of expectation, we get
\begin{align}
  &\E{e^{\theta\epsilon q}\bigl(1 - e^{\theta\epsilon(a(X) - s(X))}\bigr)} \nonumber
  \\
  & \quad = -\theta\epsilon\,\underbrace{\E{e^{\theta\epsilon q}(\lambda(X) - \mu(X))}}_{(\star)}
  \;-\; \frac{\theta^2\epsilon^2}{2}\,
        \underbrace{\E{e^{\theta\epsilon q}\left(\sigma^2_{a}(X) + \sigma^2_{s}(X)\right)}}_{(\star\star)}
  \;+\; O(\epsilon^3), \label{eq:mm-lhs-expanded}
\end{align}
where, for $(\star\star)$, we used the tower property to get
\begin{align*}
    \E{e^{\theta \epsilon q} (a(X)-s(X))^2} &= \E{e^{\theta \epsilon q} \E{(a(X)-s(X))^2 | q, X}} \\
    &= \E{e^{\theta \epsilon q}\left(\sigma^2_{a}(X) + \sigma^2_{s}(X)\right)} + \E{e^{\theta \epsilon q} (\lambda(X)-\mu(X))^2} \\
    &= \E{e^{\theta \epsilon q}\left(\sigma^2_{a}(X) + \sigma^2_{s}(X)\right)} + O(\epsilon),
\end{align*}
where the last equality follows as $\theta < 0, q \geq 0$ and $\E{(\lambda(X)-\mu(X))^2} = O(\epsilon)$. The terms $(\star)$ and $(\star\star)$ involve cross-terms between $e^{\theta\epsilon q}$
and functions of the environment state~$i$.  This is precisely where the Poisson equation
machinery is needed.
\subsubsection*{Step 3: Simplifying via the Poisson Equation}
We start by simplifying $(\star)$ via the Poisson lemma. Apply Lemma~\ref{lem:poisson} with $h(X) = \mu(X) - \lambda(X)$, so $\bar{h} = \epsilon$ (assuming $\E{|V_{h}(X)|^{1+\xi}} < \infty$):
\begin{equation}\label{eq:star}
  \E{e^{\theta\epsilon q}\,(\mu(X) - \lambda(X))}
  \;=\;
  \E{e^{\theta\epsilon q}}\,\epsilon
  \;-\;
  \theta\epsilon\,\E{e^{\theta\epsilon q}\,V_h(X^+)\,(\mu(X) - \lambda(X))}
  \;+\;
  O\!\Bigl(\epsilon^{1+\frac{\xi}{1+\xi}}\Bigr).
\end{equation}
Now define
\begin{equation}\label{eq:def-khat}
  \kappa(X) \;:=\; V_{h}(X^+)\,(\mu(X) - \lambda(X)),
\end{equation}
and apply Lemma~\ref{lem:poisson} a second time with $f = \kappa$ (assuming $\E{|V_{\kappa}(X)|^{1+\xi}} < \infty$).
Since the entire expression $\E{e^{\theta\epsilon q}\kappa(X)}$ is already multiplied
by~$\theta\epsilon$ in~\eqref{eq:star}, we only need the leading-order term:
\begin{equation}\label{eq:khat-leading}
  \E{e^{\theta\epsilon q}\,\kappa(X)}
  \;=\;
  \E{e^{\theta\epsilon q}}\,\bE_\pi\left[\kappa(X)\right]
  \;+\; O(\epsilon).
\end{equation}
Substituting~\eqref{eq:khat-leading} back
into~\eqref{eq:star}:
\begin{equation}
\label{eq:star-final}
  \E{e^{\theta\epsilon q}\,(\mu(X) - \lambda(X))} = \E{e^{\theta\epsilon q}}
  \Bigl(\epsilon - \theta\epsilon\,\bE_\pi\left[\kappa(X)\right]\Bigr)
  + O\Bigl(\epsilon^{1+\frac{\xi}{1+\xi}}\Bigr).
\end{equation}
The quantity $\bE_\pi\left[\kappa(X)\right]$ encodes the
effect of Markov modulation on the queue through the Poisson equation solution
$V_{h}$.  When the environment is i.i.d.\ ($P_{ij} = \pi_j$), $V_{h}$ is
constant, $\bE_\pi[\kappa(X)] = 0$, and the correction vanishes. Next, we compute $(\star\star)$ via the Poisson lemma. Applying Lemma~\ref{lem:poisson} with $f(X) = \sigma^2_{a}(X) + \sigma^2_{s}(X)$ (assuming $\E{|V_{f}(X)|^{1+\xi}} < \infty$), we obtain:
\[
  \E{e^{\theta\epsilon q}\,\left(\sigma^2_{a}(X) + \sigma^2_{s}(X)\right)}
  \;=\;
  \E{e^{\theta\epsilon q}}\,\mathbb{E}_{\pi}\left[\sigma^2_{a}(X) + \sigma^2_{s}(X)\right]
  \;+\; O(\epsilon),
\]
where $\mathbb{E}_{\pi}\left[\sigma^2_{a}(X) + \sigma^2_{s}(X)\right]
  = \sum_i \pi_i\,(\sigma^2_{a,i} + \sigma^2_{s,i})$.
Since this term is already multiplied by $\theta^2\epsilon^2/2$
in~\eqref{eq:mm-lhs-expanded}, the $O(\epsilon)$ correction is absorbed into the overall
$O(\epsilon^3)$ error. Combining~\eqref{eq:mm-rhs} and~\eqref{eq:mm-lhs-expanded} with the Poisson lemma
computations, and noting that the correction from $(\star)$ contributes the effective
variance term,
and rearranging:
\begin{equation}\label{eq:mm-mgf-prelimit}
  \E{e^{\theta\epsilon q}}
  \;=\;
  \frac{1 + o(1)}{1 - \frac{1}{2} \theta\Bigl(
    \mathbb{E}_{\pi}\left[\sigma^2_{a}(X) + \sigma^2_{s}(X)\right]
    + 2\mathbb{E}_{\pi}\left[\kappa(X)\right]\Bigr) + o(1)},
\end{equation}
Taking $\epsilon \to 0$ in~\eqref{eq:mm-mgf-prelimit}, we recognize the right-hand
side as the Laplace transform of an exponential random variable:
\begin{equation}\label{eq:mm-limit}
  \epsilon q
  \;\xrightarrow{d}\;
  \mathrm{Expo}\!\left(\frac{2}{\sigma^2_{\mathrm{eff}}}\right),
\end{equation}
where $q$ denotes the steady-state queue length and the \emph{effective variance}
is defined by
\begin{equation}\label{eq:eff-var}
  \sigma^2_{\mathrm{eff}}
  =
  \lim_{\epsilon \to 0} \mathbb{E}_{\pi}\left[\sigma^2_{a}(X) + \sigma^2_{s}(X)\right] + 2\mathbb{E}_{\pi}\left[\kappa(X)\right],
\end{equation}
assuming the limit exists. This has the same \emph{form} as the classical result
$\epsilon q \to \mathrm{Expo}(2/\sigma^2)$ from Section~2, but with the i.i.d.\ variance
$\sigma^2 = \sigma^2_a + \sigma^2_s$ replaced by the effective variance
$\sigma^2_{\mathrm{eff}}$ that accounts for the temporal correlations introduced by the
modulating chain.
\paragraph{Independence from the mixing time.}
A striking feature of~\eqref{eq:mm-limit} is that its validity requires only finiteness of
certain moments of the Poisson solutions $V_{h}$, $V_{\kappa}$, and
$V_{\hat{\ell}}$---conditions that depend on the \emph{structure} of the modulating chain
but \emph{not on its mixing time}.  Classical approaches to Markov-modulated queues
typically require the chain to mix quickly relative to the heavy-traffic timescale; the
Poisson equation sidesteps this entirely.
\paragraph{Summary.}
Markov modulation breaks the independence that underlies the classical transform method,
creating cross-terms $\E{e^{\theta\epsilon q}\,f(X)}$ that resist direct computation.
The Poisson equation decomposes each such cross-term into a product of marginals plus a
correction involving $V_f$; assembling these corrections produces the effective variance
$\sigma^2_{\mathrm{eff}}$.  The test function $e^{\theta\epsilon q}$ remains unchanged---it is the
\emph{analysis tool} that changes, making the Poisson equation a modular component that
can be combined with the transform method wherever Markov modulation introduces
dependence.
\end{DTonly}

%% ----- Model Description -----
\subsubsection*{Model Description for the Markov-Modulated $M/M/1$ queue}
Consider a single-server queue operating in continuous time, in a random environment denoted by $Z(t)$.
Let $\{Z(t)\}_{t \ge 0}$ be an ergodic continuous-time Markov chain (CTMC)
on a finite state space $\mathcal{Z}$ with transition rate matrix
$Q = [\alpha_{ii'}]_{i,i'\in\mathcal{Z}}$ and stationary distribution $\pi$.
We use $\alpha_{i\bullet} := \sum_{i' \neq i} \alpha_{ii'}$ to denote the total
rate out of state $i$.  Conditioned on the environment $Z(t) = i$, arrivals occur at rate $\lambda_i$ and
the server processes at rate $\mu_i$.
The long-run average rates are $\lambda=\mathbb{E}_\pi[\lambda_Z]$ and $\mu=\mathbb{E}_\pi[\mu_Z]$,
with $\epsilon := \mu - \lambda > 0$ governing the proximity to heavy traffic.
The state of the system at time $t$ is the pair $(Z(t), q(t))$, which forms a
positive-recurrent CTMC for all $\epsilon > 0$.  We write $q$ for a
random variable that follows the stationary distribution of $q(t)$.
This model is studied in \citet{HL-Gro-2026-Markov-Modulated} under a countable state space $\mathcal{Z}$. For ease of exposition, we focus on the finite-state-space model here.
%% ----- Why standard i.i.d.\ analysis fails -----
\subsubsection*{Why Standard Analysis Fails}
An essential step to compute the \emph{transform equation} \eqref{eq:ct-transform-equation} is the factorization
$\E{e^{\theta\epsilon q}\bigl(\mu\,e^{-\theta\epsilon} - \lambda\bigr)} = \bigl(\mu\,e^{-\theta\epsilon} - \lambda\bigr)\E{e^{\theta\epsilon q}}$, which is possible because $\lambda$ and $\mu$ are constants. With Markov modulation, \new{since} the rates $\lambda$ and $\mu$ depend on $Z(t)$, which in turn
is correlated with $q(t)$ through the joint chain $(Z(t), q(t))$, setting the
generator drift to zero produces cross-terms $\E{e^{\theta\epsilon q}\bigl(\mu_Z\,e^{-\theta\epsilon} - \lambda_Z\bigr)}$ (which is of the form $\mathbb{E}\!\left[e^{\theta \epsilon q}\,f(Z)\right]$)
that cannot be factored.
%% ----- The Poisson Equation -----
\subsubsection*{The Poisson Equation}
The Poisson equation of a function $f$ asks for a function $V_f$ such that \begin{equation}\label{eq:poisson-ct}
  V_f(i)
  \;=\;
  \frac{f(i) - \bar{f}}{\alpha_{i\bullet}}
  \;+\; V_f^+(i),
  \qquad \forall\, i \in \mathcal{Z},
\end{equation}
where $\bar{f}=\E{f(Z)}$ and $V_f^+(i) := \sum_{i' \neq i} \tfrac{\alpha_{ii'}}{\alpha_{i\bullet}}\,V_f(i')$
is the expected value of $V_f$ one transition after state $i$.
\new{Using matrix vector notation, this equation is equivalent to} $QV_f = \bar{f} - f$. Noting that this equation is similar to the Bellman equation for the policy evaluation problem, intuitively, the solution $V_f(i)$ characterizes the total expected future deviation of $f(Z)$ from its mean, starting from state $i$.
The central role of the Poisson equation is to bound the difference between the cross-terms $\mathbb{E}\!\left[e^{\theta \epsilon q}\,f(Z)\right]$ and their independent counterparts $\E{e^{\theta\epsilon q}} \, \E{f(Z)}$, as follows:
\begin{equation}\label{eq:poisson-lemma-ct}
  \mathbb{E}\!\left[e^{\theta \epsilon q}\, f(Z)\right]
  \;=\;
  \mathbb{E}\!\left[e^{\theta \epsilon q}\right]\bar{f}
  \;+\;
  (e^{\theta\epsilon}-1)\,
  \mathbb{E}\!\left[e^{\theta\epsilon q}\, V_f(Z)\,
    (\lambda_Z - \mu_Z)\right]
  \;+\;
  O\!\left(\epsilon^{2}\right).
\end{equation}
The first term on the right-hand side is the decoupled approximation (as if $f(Z)$ and $q$
were independent), and the second is a correction of order $\epsilon$ that
captures the covariance between the environment and the queue through the
Poisson solution $V_f$. Equation \eqref{eq:poisson-lemma-ct} is formally proved by \citet[Theorem 2]{HL-Gro-2026-Markov-Modulated}. Now we are ready to use the transform method.
\subsubsection*{Step 1: Capturing the System Dynamics}
To avoid any existence issues, here we work with the one-sided Laplace transform, i.e., $e^{\theta\epsilon q}$ with $\theta < 0$.
We follow exactly the same steps as in the $M/M/1$ queue in Section~\ref{sec:ct-transform-method}, keeping in mind that the arrival and service rates depend on $Z$. Specifically, we obtain
\begin{equation}
  \mathbb{E}\!\left[e^{\theta\epsilon q}
    \bigl(\mu_Z\,e^{-\theta\epsilon} - \lambda_Z\bigr)\right]
  \;=\;
  e^{-\theta\epsilon}\,\mathbb{E}\!\left[\mu_Z\,
    \mathbbm{1}_{\{ q=0\}}\right]. \label{eq:intermediate_mm1_drift}
\end{equation}
\new{Similar} to the $M/M/1$ queue, we set $\theta = 0$ in \eqref{eq:intermediate_mm1_drift} to obtain $\E{\mu_Z\,\mathbbm{1}_{\{q=0\}}} = \E{\mu_Z - \lambda_Z} = \epsilon$. Hence, we obtain the \emph{transform equation}
\begin{equation}\label{eq:mm-transform-ct}
  \mathbb{E}\!\left[e^{\theta\epsilon q}
    \bigl(\mu_Z\,e^{-\theta\epsilon} - \lambda_Z\bigr)\right]
  \;=\;
  \epsilon\, e^{-\theta\epsilon}.
\end{equation}
\subsubsection*{Step 2: Second-Order Approximation}
To compute the left-hand side of the \emph{transform equation}, we start by taking the Taylor expansion of $e^{-\theta \epsilon}$ to first order:
\begin{align}
  \mathbb{E}\!\left[e^{\theta\epsilon q}
    \bigl(\mu_Z\,e^{-\theta\epsilon} - \lambda_Z\bigr)\right]
  &=\underbrace{\mathbb{E}\!\left[e^{\theta \epsilon q}\,(\mu_Z - \lambda_Z)\right]}_{(\star)}
  \;-\; \theta \epsilon\,\underbrace{\mathbb{E}\!\left[e^{\theta \epsilon q}\,\mu_Z\right]}_{(\star\star)}
  \;+\; O(\epsilon^2). \label{eq:mm-expanded-ct}
\end{align}
The two expectations $(\star)$ and $(\star \star)$ involve cross-terms between
$e^{\theta \epsilon q}$ and functions of the environment state $Z$, which is precisely
where the Poisson equation is needed.
\subsubsection*{Step 3: Simplifying via the Poisson Equation}
Define $h(Z) := \mu_Z - \lambda_Z$, so $\bar{h} = \epsilon$.
Apply \eqref{eq:poisson-lemma-ct} with $f = h$:
\begin{equation}\label{eq:star-ct}
  \mathbb{E}\!\left[e^{\theta \epsilon q}\,(\mu_Z - \lambda_Z)\right]
  \;=\;
  \epsilon\,\mathbb{E}\!\left[e^{\theta\epsilon q}\right]
  \;+\; (e^{\theta \epsilon}-1)\mathbb{E}\!\left[e^{\theta \epsilon q}\,V_h(Z)\,(\lambda_Z-\mu_Z)\right]
  \;+\; O\!\left(\epsilon^2\right).
\end{equation}
Now define
$\kappa(Z):= V_h(Z)(\mu_Z-\lambda_Z)$
and apply \eqref{eq:poisson-lemma-ct} a \emph{second time} with $f = \kappa$.
Similarly, we apply \eqref{eq:poisson-lemma-ct} with $f=\mu_Z$ to bound $(\star\star)$. Putting everything together, and simplifying terms, we obtain
\begin{equation}\label{eq:mm-mgf-prelimit-ct}
  \mathbb{E}\!\left[e^{\theta \epsilon q}\right]
  \;=\;
  \frac{1}{1 - \theta \left(\mu+\mathbb{E}\!\left[\kappa(Z)\right]\right)} + O\left(\epsilon\right).
\end{equation}
Taking $\epsilon \to 0$ in \eqref{eq:mm-mgf-prelimit-ct}, we recognize the
right-hand side as the Laplace transform of an exponential random variable:
\begin{equation}\label{eq:mm-limit-ct}
  \epsilon q
  \;\xrightarrow{d}\;
  \mathrm{Expo}\!\left(\frac{1}{\mu+\kappa^*}\right),
\end{equation}
where $q$ denotes the steady-state queue length and, assuming the limit exists, we define
\begin{equation}\label{eq:kstar-ct}
  \kappa^* \;:=\; \lim_{\epsilon\to 0}\,\mathbb{E}_\pi[\kappa(Z)]
  \;=\; \lim_{\epsilon\to 0}\,\mathbb{E}_\pi\!\left[V_h(Z)\,(\mu_Z-\lambda_Z)\right].
\end{equation}
This has the same \emph{form} as the classical result
$\epsilon  q \stackrel{d}{\to} \mathrm{Expo}(1/\mu)$. The only difference is that the mean $\mu$ is replaced by $\mu + \kappa^*$, the effective variance, which encodes the
temporal correlations.
\paragraph{Beyond fast-mixing assumptions.}
A striking feature of \eqref{eq:mm-limit-ct} is that its validity requires only
finiteness of certain moments of the Poisson solutions $V_h$ and $V_\kappa$, which are trivial for finite-state Markov chains.
These conditions depend on the modulating chain through its Poisson solutions $V_h$ and $V_\kappa$, but do not require an explicit a priori bound on the mixing time.
Classical approaches to Markov-modulated queues typically impose an explicit fast-mixing condition relative to the heavy-traffic timescale. The Poisson equation instead absorbs the temporal correlations into the constant $\kappa^*$, so the chain's mixing behavior enters the limiting distribution through this constant rather than as a separate assumption on the timescale of convergence.

\subsection{Matching Queue with State-Dependent Arrivals}\label{sec:matching-queue}
We now turn to a model that arises naturally in two-sided marketplaces \new{such as} ride-hailing platforms (\citet{varma2023dynamic}\new{)} and freelance labor markets, where two types of agents (e.g., customers and servers) arrive and are matched upon meeting. Two-sided queues also naturally show up in many other applications, such as payment channel networks (\citet{sushil_blockchain}), assemble-to-order systems (\citet{matchingqueues}), quantum switch (\citet{zubeldia2026matching, bhambay2025optimal}), organ donation (\citet{kerimov2025optimality, comte2021online}), network revenue management (\citet{gupta2024greedy}), etc. We also refer the reader to \citet{caldentey2009fcfs, adan2012exact} for further references. Here we consider \new{a single two-sided queue, which models a taxi-stand, and forms the basic building block of stochastic matching networks.}
\subsubsection*{What's new here (Preview).} The arrival rate now depends on the
queue state through a pricing control, which couples the transform
to the state and produces an \emph{implicit} functional equation
rather than a closed-form MGF. The new tool is the
\emph{inverse Fourier transform}, which converts the implicit
frequency-domain equation into an explicit ODE for the density in
the spatial domain.

\subsubsection*{Model Description}
Consider a discrete-time matching queue with two sides: a \emph{customer} side queue of length $q^c$ and a \emph{server} side queue of length $q^s$.  Whenever both sides are nonempty, agents are matched immediately and leave the system.  As a consequence, at most one side can be nonempty at any time\new{,} similar to how a non-idling server in a classical queue ensures that work and idleness do not coexist. The state of the system is completely described by the \emph{imbalance}
\[
z \;:=\; q^c - q^s,
\]
which takes values in $\mathbb{Z}$\new{, the set of integers.} Positive values indicate a surplus of customers, and negative values indicate a surplus of servers.  The original queue lengths are recovered via $(q^c, q^s) = (z, 0)$ if $z > 0$ and $(q^c, q^s) = (0, -z)$ if $z < 0$.

Let us first consider the setting where, in each time slot,  exogenous arrivals to each side are i.i.d.\ random variables with mean $\lambda$ and $\mu$ respectively. Unlike the classical single-server queue, the matching queue with larger server arrival rate $(\lambda < \mu)$ is transient as the server queue escapes to infinity. Similarly, $\lambda > \mu$ also leads to an unstable system. Even with equal arrival rates $(\lambda = \mu)$, the system is \emph{null recurrent}\new{, since} the imbalance performs a symmetric random walk and does not have a stationary distribution. External intervention is therefore needed for stability \new{to ensure that there is a stationary distribution. One} natural mechanism is \emph{dynamic pricing}\new{, where the} platform adjusts prices based on the current imbalance, which in turn modulates the effective arrival rates to each side.

\subsubsection*{State-Dependent Arrivals via Pricing}
We model the effect of pricing through bounded control functions $\phi^c(\cdot)$ and $\phi^s(\cdot)$ that modify the arrival rates as functions of the imbalance.  Specifically, the effective arrival rates to the customer and server sides are
\[
\lambda(z) = \lambda^\star + \epsilon\,\phi^c\!\left(\frac{z}{\tau}\right), \qquad \mu(z) = \lambda^\star + \epsilon\,\phi^s\!\left(\frac{z}{\tau}\right),
\]
where $\epsilon > 0$ is the magnitude of the control and $\tau > 0$ is a spatial scale parameter (a threshold). \new{When the imbalance} $z$ is large and positive (many customers waiting), the platform might raise customer-side prices to discourage more customer arrivals ($\phi^c$ increases) and/or lower server-side prices to attract server arrivals ($\phi^s$ decreases), and vice versa.
For stability, the control must provide a \new{drift towards the origin. There exist} $\delta > 0$ and $K > 0$ such that $\phi^c(x) - \phi^s(x) < -\delta$ for all $x > K$ and $\phi^c(x) - \phi^s(x) > \delta$ for all $x < -K$.
One such example of state-dependent control is the two-price policy, corresponding to $\phi^c(x)=-\mathbbm{1}\{x>1\}$ and $\phi^s(x)=-\mathbbm{1}\{x<-1\}$. Specifically, we have
\begin{align}
\label{eq: two_price_policy}
\lambda(z)=\lambda^\star-\epsilon\mathbbm{1}\{z>\tau\}, \ \mu(z)=\lambda^\star-\epsilon \mathbbm{1}\{z<-\tau\} \quad \forall z \in \mathbb{Z}.
\end{align}
The customer and server arrivals to the system are given by independent random variables $a^c(z)$ and $a^s(z)$ when the imbalance is $z$, with mean $\mathbb{E}[a^c(z)] = \lambda(z)$, $\mathbb{E}[a^s(z)] = \mu(z)$ and variance $\text{Var}[a^c(z)] = \sigma^c(\lambda(z))$, $\text{Var}[a^s(z)] = \sigma^s(\mu(z))$. We assume $a^c, a^s$ have finite support and the variances $\sigma^c(\lambda(z))$ and $\sigma^s(\mu(z))$ are bounded. Now, the imbalance evolves according to
\begin{equation}\label{eq:imbalance-evolution}
z(k+1) \;=\; z(k) + a^c\bigl(z(k)\bigr) - a^s\bigl(z(k)\bigr).
\end{equation}
Note the key structural difference from the single-server queue \new{is that there} is no positive-part operator and no complementarity condition.  Instead, the state space is \emph{all of $\mathbb{Z}$}, and the arrivals themselves are functions of the state.  This state-dependence is what makes the analysis fundamentally different from the classical case.

\subsubsection*{Heavy-Traffic Regimes}
Heavy traffic is achieved by letting the control vanish, which pushes the system toward null recurrence, similar to the heavy-traffic regime in a classical queue. There are two natural ways to do this\new{. Either} let the control magnitude $\epsilon \to 0$, so the pricing intervention becomes negligible, or let the threshold $\tau \to \infty$, so\new{,} the control is applied only at increasingly extreme imbalance levels. The interplay between $\epsilon$ and $\tau$ gives rise to three regimes, distinguished by how the threshold $\tau$ scales relative to $1/\epsilon$. As summarized in Table~\ref{tab: phase_transition}, we observe a phase transition in the limiting distribution of imbalance from a Laplace distribution when $\tau = o(1/\epsilon)$ to a Uniform distribution when $\tau = \omega(1/\epsilon)$. The in-between regime is when $\tau = \Theta(1/\epsilon)$, where we observe the limiting distribution to be Gibbs as a function of the control curves $(\phi^c, \phi^s)$.
The Gibbs density of the hybrid regime degenerates to the Laplace distribution as one moves toward $\tau = o(1/\epsilon)$, and to the uniform distribution as one moves toward $\tau = \omega(1/\epsilon)$.

\textbf{Cost of control and applications.}  \new{State-dependent pricing is usually implemented through dynamic pricing leading to some cost in terms of lost revenue. The three regimes correspond to different operating points on the underlying trade-off between cost-of-control and system performance} (\citet{varma2022twosidedqueues}). A \emph{low} cost of control favors aggressive intervention, which yields $\tau = o(1/\epsilon)$ \new{leading to} the Laplace regime\new{. This} case captures ride-hailing platforms, where a small perturbation around a static base price is known to be near-optimal (\citet{varma2023dynamic}). A \emph{moderate} cost of control favors a balanced threshold $\tau = \Theta(1/\epsilon)$ and the hybrid (Gibbs) regime\new{. Payment} channel networks such as the Lightning network for Bitcoin fit this case, with blockchain settlement serving as the costly control lever (\citet{sushil_blockchain}). A \emph{high} cost of control discourages frequent intervention and yields $\tau = \omega(1/\epsilon)$ \new{leading to} the uniform regime\new{. Ride-hailing} with significant customer dissatisfaction from surge pricing, or manufacturing systems with substantial machine setup costs, fall under this case. Thus, the same matching-queue model, equipped with the same transform-method analysis, captures three qualitatively distinct operating modes that platform designers encounter in practice.

\begin{table}[ht]
\TABLE{A Summary of Heavy-Traffic Phase Transitions in Matching Queues.
    \label{tab: phase_transition}}
    {
    \begin{tabular}{>{\centering\arraybackslash}p{85pt}| >{\centering\arraybackslash}p{134pt}|>{\centering\arraybackslash}p{127pt}|>{\centering\arraybackslash}p{115pt}}
    \hline
         &  $\tau = o(1/\epsilon)$, e.g., $\tau=1$ &   $\tau = \Theta(1/\epsilon)$, e.g., $\tau = 1/\epsilon$ & $\tau = \omega(1/\epsilon)$, e.g., $\tau = 1/\epsilon^2$ \\
         \hline
         & & & \\
         Bernoulli Arrivals, Two Price Policy & Laplace & Hybrid & Uniform \\ [-13pt]
     & \includegraphics[width=0.2\textwidth]{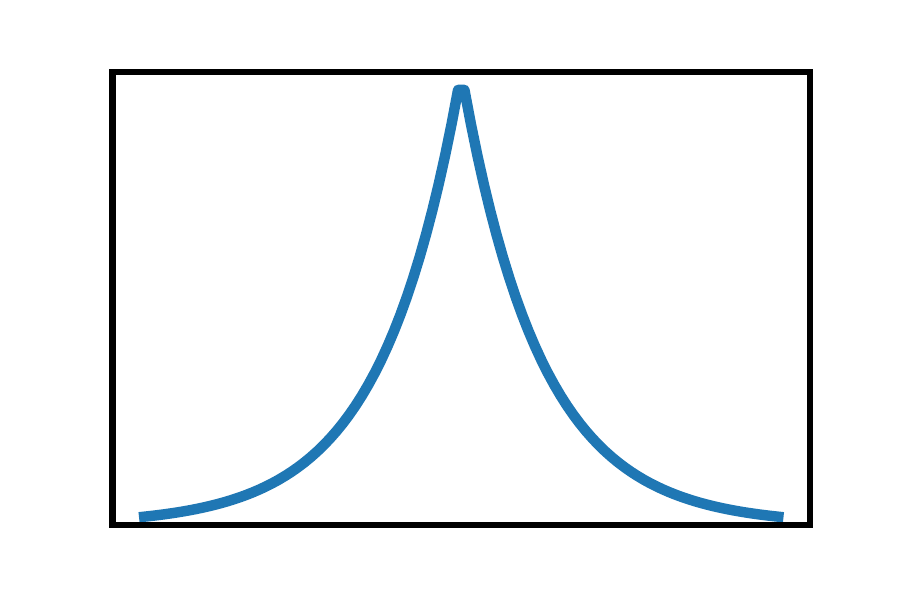} &\includegraphics[width=0.2\textwidth]{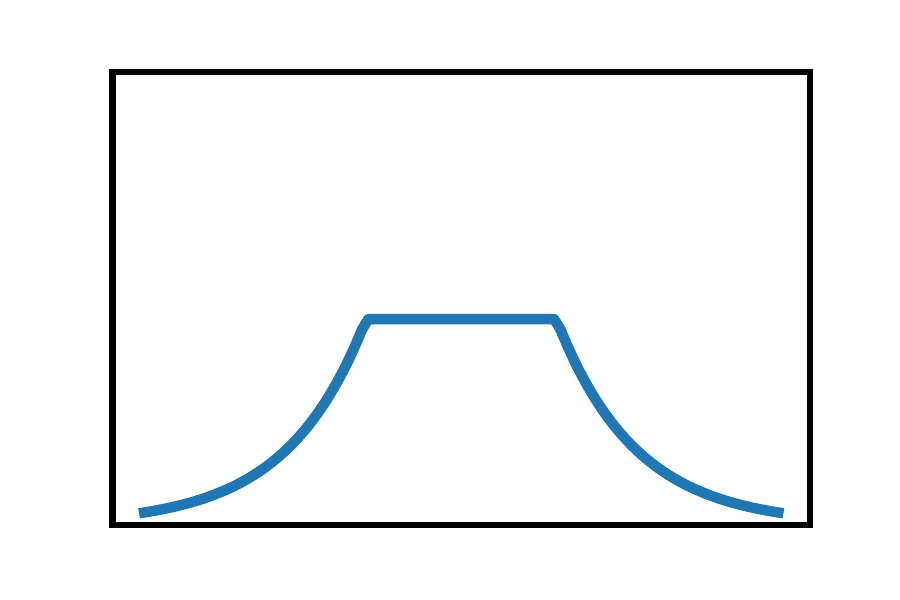} & \includegraphics[width=0.2\textwidth]{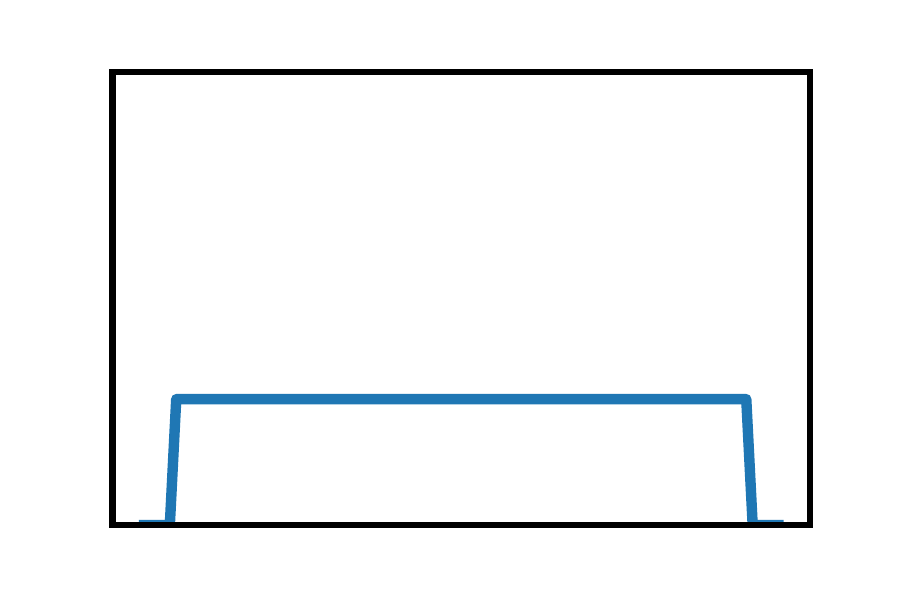} \\ \hline
    General Arrival and Pricing & Asymmetric Laplace & Gibbs Distribution &  Uniform    \\
  \hline
  Proof Technique& (i) Transform method on $|z|$ (ii) Exploiting symmetry around zero & (i) Transform method on $z$ (ii) Inverse transforms to solve functional equation & (i) Transform method within the thresholds (ii) Large $\epsilon$ ensures bounded support \\ \hline
    \end{tabular}}{We refer the reader to \citet{varma2022twosidedqueues} for formal statement of these results.}
\end{table}
\subsubsection*{Hybrid Regime.} \new{Here, we focus on} the hybrid regime and set $\tau = 1/\epsilon$.
\subsubsection*{Step 1: Capturing the System Dynamics}
Since the imbalance $z$ is signed and the state space is all of $\mathbb{Z}$, we work with the \emph{characteristic function} $\mathbb{E}[e^{j\omega\epsilon z}]$ for $\omega\in\mathbb{R}$ rather than the MGF\new{.} The CF always exists, and as we will see in Step 3, it plays naturally with the inverse Fourier transform. \new{Now, setting the drift of $z(k)$ to zero in steady state, we get,}
\[
\mathbb{E}\bigl[e^{j\omega\epsilon\, z(k+1)}\bigr] \;=\; \mathbb{E}\bigl[e^{j\omega\epsilon\, z(k)}\bigr] \overset{\eqref{eq:imbalance-evolution}}{\implies} \mathbb{E}\Bigl[e^{j\omega\epsilon\, z}\,\Bigl(e^{j\epsilon\omega\,(a^c(z) - a^s(z))} - 1\Bigr)\Bigr] \;=\; 0,
\]
where $z$ denotes the steady-state imbalance.
Note that, unlike the single-server queue, there is no complementarity condition to \new{exploit. Instead, we have the} state-dependent arrivals inside the expectation.

\subsubsection*{Step 2: Second-Order Approximation}
The next step is to expand the inner exponential for small $\epsilon$. Using a Taylor expansion of $e^{j\epsilon\omega(a^c(z) - a^s(z))} - 1$ and carefully tracking the mean and variance of $a^c(z) - a^s(z)$ (which are themselves functions of $z$ through the control), \new{and taking the limit,} one arrives at the \emph{implicit} functional equation
\begin{equation}\label{eq:implicit-cf}
j\omega\,\mathbb{E}\!\left[e^{j\omega\xi}\right] \;=\;\mathbb{E}\!\left[e^{j\omega\xi}\, g(\xi)\right] \quad \text{with} \quad g(x) \;=\; \frac{2}{\sigma^c(\lambda^\star) + \sigma^s(\lambda^\star)} \left(\phi^s(x) - \phi^c(x)\right),
\end{equation}
where $\xi$ denotes the scaled limiting imbalance ($\epsilon z$ in the limit) and the function $g$ captures the impact of the control curves $(\phi^c, \phi^s)$. \new{To obtain this equation, one has to first establish tightness of the scaled imbalances, so that the limit is meaningful (along a subsequence). This is separately done using standard Lyapunov drift argument.}

\subsubsection*{Step 3: Solving via Inverse Fourier Transform}
\new{In the single-server queue, at this point, one obtains Equation~\eqref{eq:limiting-mgf}, which gives the limiting MGF explicitly. However, here, we have an implicit equation on the characteristic function, which needs to be solved. We do this by going from the characteristic function to the density using the inverse Fourier transform.} Suppose the distribution of $\xi$ has a density $f(\cdot)$ and assume that it is continuously differentiable.  Then equation~\eqref{eq:implicit-cf} can be written as
\[
j\omega \int_{-\infty}^{\infty} e^{j\omega x}\, f(x)\, dx  \;=\; \int_{-\infty}^{\infty} e^{j\omega x}\, g(x)\, f(x)\, dx.
\]
The right-hand side is the Fourier transform of the function $g(x) f(x)$\new{. The} left-hand side is $j\omega$ times the Fourier transform of $f(x)$\new{, which is the Fourier transform of $-f'(x)$ due to the differentiation property of the Fourier transform. Taking the inverse Fourier transform, we get,}
\begin{equation}\label{eq:ode-density}
g(x)\, f(x) \;=\; -f'(x).
\end{equation}
This is a first-order linear ODE for the density $f$, which can be solved by dividing both sides by $f(x)$ and integrating, giving the solution
\begin{equation}\label{eq:gibbs-density}
f(x) \;=\; c\, \exp\!\left(-\int_0^x g(y)\, dy\right),
\end{equation}
where $c > 0$ is the normalizing constant that ensures $\int_{-\infty}^{\infty} f(x)\, dx = 1$.  The density~\eqref{eq:gibbs-density} is a \emph{Gibbs distribution} with energy function $G(x) = \int_0^x g(y)\, dy$.  Its shape is entirely determined by the control functions $\phi^c$ and $\phi^s$ through $g$. \new{Here, using the characteristic function enabled us to exploit the properties of the Fourier transform to obtain the simple ODE.}

\textit{Formalizing the inverse transform technique:}
The derivation above assumed that the limiting distribution admits a density, which need not hold a priori. The argument can nonetheless be made rigorous using standard distribution-theoretic techniques, under mild smoothness assumptions on the control functions $\phi^c$ and $\phi^s$; we refer the reader to~\citet{varma2022twosidedqueues} for the full details.
\paragraph{Contrast with the abandonment setting.}  It is instructive to compare the differential equation here with the ODE~\eqref{eq:mgf-ode} that arose in the single-server queue with abandonment (see Section \ref{sec:abandonment}). In the abandonment setting, the abandonment term introduces a derivative $dM/d\theta$ in the \emph{transform domain},
yielding a first-order ODE for the \emph{MGF} $M(\theta)$ itself.  Here, the implicit functional equation is in the \emph{transform domain}, and we used the inverse Fourier transform to obtain a first-order ODE for the \emph{density} $f(x)$\new{.} In both cases, the transform method encounters an implicit or differential equation rather than an explicit expression.

\subsubsection*{The Laplace regime}
To contrast and for completeness, we briefly sketch the Laplace regime, where we fix $\tau = 1$. Here the threshold $(\tau = 1)$ is small relative to $1/\epsilon$, so the control acts almost everywhere on the state space. The main idea here is that $|z|$ behaves like the queue length of a single-server queue (assuming symmetric control curves $\phi^c, \phi^s$ around the origin) in heavy traffic\new{, and so,} the natural object of study is $|z|$ rather than $z$ itself. The methodological idea is to engineer two complex exponential test functions. The first, $e^{j\omega\epsilon|z|}$, yields the characteristic function of $\epsilon|z|$ in closed form and reveals that $\epsilon|z|$ converges to an exponential random variable\new{,} exactly as in the single-server queue. The second, $\mathrm{sign}(z)\,e^{j\omega\epsilon|z|}$, establishes that the limiting distribution of $\epsilon z$ is symmetric around zero. Together, these two facts yield that $\epsilon z$ converges to a Laplace distribution. Methodologically, the Laplace regime extends the transform method differently from the hybrid regime\new{.} Instead of inverting an implicit Fourier-domain equation, one engineers multiple test functions that exploit symmetry to pin down the limit. We refer the reader to \citet{varma2022twosidedqueues} for the full derivation.

\section{Stochastic Networks}\label{sec:networks}
We now turn to \emph{stochastic networks} of multiple
interacting queues, where the state is inherently multi-dimensional. \new{The
same three-step recipe applies, but each step becomes harder because the
queue-length vector is multi-dimensional and as such Step~3 now may yield a
\textit{multi-dimensional functional equation}. The key simplification is
\emph{state space collapse} (SSC).} In heavy
traffic the routing or scheduling policy forces the queue-length vector
$\bm q=(q_1,\dots,q_n)$ to concentrate near a smaller set, and the
\new{\textit{geometry}} of that set governs how hard Step~3 is (Figure~\ref{fig:ssc}).
We illustrate this on two networks of increasing difficulty.
Under load balancing with the Join-the-Shortest-Queue (JSQ) policy, all queues
equalize, so $\bm q$ collapses onto a single ray, the line $q_1=\dots=q_n$
(Figure~\ref{fig:ssc}, left panel); the problem becomes effectively one-dimensional and
Step~3 reduces to a scalar equation. In the input-queued switch under
MaxWeight scheduling, $\bm q$ still collapses, but only onto a lower-dimensional
\emph{cone} $\mathcal{K}$ that is itself multi-dimensional
(Figure~\ref{fig:ssc}, right panel); Step~3 then becomes a multi-dimensional
functional equation, which is far harder to solve.
 
\definecolor{qbrown}{RGB}{150,86,20}
\definecolor{setfill}{RGB}{183,206,219}
\definecolor{diagline}{RGB}{56,110,135}
\tikzset{
    ax/.style={-{Stealth[length=3mm]},black,thick},
    qvec/.style={qbrown,line width=1.3pt,-{Stealth[length=3mm]}},
    gap/.style={qbrown,line width=0.7pt,dashed},
    setline/.style={diagline,line width=1.6pt}}
\begin{figure}[ht]
\centering
% ---------- Panel (a): CRP / line (2D) ----------
\begin{minipage}{0.48\textwidth}
  \centering
  \begin{tikzpicture}[scale=0.62]
    \draw[ax] (0,0) -- (5.6,0) node[black,right,inner sep=2pt] {$q_1$};
    \draw[ax] (0,0) -- (0,5.6) node[black,above,inner sep=2pt] {$q_2$};
    \draw[setline] (0,0) -- (5.0,5.0);
    \node[diagline,anchor=south,inner sep=2pt]
      at (5,5) {\footnotesize $q_1\!=\!q_2$};
    \coordinate (Pa) at (3.25,3.25);
    \coordinate (Qa) at (2.40,4.10);
    \draw[gap] (Qa) -- (Pa);
    \fill[diagline] (Pa) circle (1.6pt);
    \draw[qvec] (0,0) -- (Qa);
    \node[qbrown] at (1.05,2.35) {$\bm q$};
  \end{tikzpicture}
\end{minipage}
\hfill
% ---------- Panel (b): non-CRP / cone in a plane (3D) ----------
\begin{minipage}{0.48\textwidth}
  \centering
  \begin{tikzpicture}[scale=0.62,
    x={(0.95cm,-0.05cm)},
    y={(-0.75cm,-0.5cm)},
    z={(0cm,1cm)}]
    % Axes
    \draw[ax] (0,0,0) -- (7,0,0);
    \node[black,anchor=west,inner sep=2pt] at (7,0,0) {$q_1$};
    \draw[ax] (0,0,0) -- (0,3.0,0);
    \node[black,anchor=north east,inner sep=2pt] at (0,3.0,0) {$q_2$};
    \draw[ax] (0,0,0) -- (0,0,4);
    \node[black,anchor=south,inner sep=2pt] at (0,0,4) {$q_3$};
    \coordinate (Lb) at (8.0,1.0,3.0);
    \coordinate (Ub) at (12.0,9.0,3.0);
    \fill[setfill,opacity=0.85]
      (0,0,0) -- (Lb)
      .. controls (10.0,3.5,3.3) and (12.0,7.5,3.3) ..
      (Ub) -- cycle;
    \draw[diagline,line width=1.4pt] (0,0,0) -- (Lb);
    \draw[diagline,line width=1.4pt] (0,0,0) -- (Ub);
    \coordinate (Pb) at (4.4,1.3,1.5);
    \coordinate (Qb) at (3.8,1.6,3.0);
    \draw[qvec] (0,0,0) -- (Qb);
    \node[qbrown] at (2.0,0.9,2) {$\bm q$};
    \draw[gap] (Qb) -- (Pb);
    \fill[diagline] (Pb) circle (1.6pt);
    \node[diagline] at (7.0,4.0,2.0) {\large$\mathcal{K}$};
  \end{tikzpicture}
\end{minipage}
\caption{Illustration of state space collapse in heavy traffic. The
queue-length vector $\bm q$ concentrates near a low-dimensional set; the
dashed segment marks the small residual distance to that set. The left panel shows SSC to a single dimension (CRP), and the right panel to a multi-dimensional subspace.}
\label{fig:ssc}
\end{figure}
 
\subsection{Load Balancing Systems}
Load balancing is a fundamental problem in the design and operation of large-scale service systems. In cloud data centers, millions of incoming requests must be dispatched across thousands of servers in real time\new{. The} routing policy directly affects the delay and the overall resource utilization of the system. Similar load-balancing challenges arise in web server farms, content delivery networks, call center staffing, and hospital patient routing.
Effective routing in these domains must account for stochastic demand and resource constraints to prevent systemic bottlenecks (\citet{green2006health, armony2015patient}). \new{A simple and well-studied load-balancing policy is \emph{Join-the-Shortest-Queue} (JSQ), where each arriving job is routed to the server with the fewest waiting jobs.} JSQ has been known to be delay-optimal (in a stochastic ordering sense) among all non-anticipating policies  as shown by \citet{Menich_OptimalityJSQ_majorization, winston_JSQ_1977}. Here, we present the limiting distribution and the tail bound of a load balancing system \new{in the} classic heavy traffic regime using the transform method (\citet{jhunjhun2024exponential}) and briefly discuss \new{extensions to} many-server heavy traffic, abandonment and Markov modulation for a load balancing system.
 
\subsubsection{Load Balancing under Classic Heavy Traffic}\label{sec:jsq-corrected}
Consider a system of $n$ identical parallel servers, operating in continuous time.  Customers arrive according to a Poisson process with rate $\lambda = n(1-\epsilon)$, and each server processes jobs independently at rate~$1$ with exponentially distributed service times. Here, $\epsilon$ denotes the heavy-traffic parameter. \new{In the classical heavy traffic regime,} the system size $n$ is fixed and $\epsilon \rightarrow 0$. Upon arrival, each customer joins the shortest queue (with ties broken uniformly at random).  Let $q_i(t)$ denote the number of customers at server $i$ (including the one in service), so the system state is $\mathbf{q}(t) = (q_1(t), \ldots, q_n(t))$.
Define the \emph{total queue length} $Q(t) = \sum_{i=1}^n q_i(t)$ and the \emph{scaled total queue length} $\hat{Q} = \epsilon Q$\new{. Under JSQ, in the heavy-traffic limit, we establish that}
\begin{equation} \label{eq:jsq_limiting_result}
n\epsilon(q_1, \hdots, q_n) \overset{d}{\to} (Y, \hdots, Y), \quad \text{where} \quad Y \sim \text{Expo}(1).
\end{equation}
\paragraph{Interpretation.}
\new{This result demonstrates two phenomena.} First,
on the heavy-traffic scale all individual queues collapse to one
another and to the scaled average: each $n\epsilon\,q_i$ has the same exponential limit $Y \sim \mathrm{Expo}(1)$
as $\epsilon Q$. Second, \new{the heavy-traffic limit of $\epsilon Q$ matches that
of} a single $M/M/1$ queue with arrival rate $\lambda = n(1-\epsilon)$
\new{and service rate $n$. Thus, on the heavy-traffic scale, the $n$-server system under JSQ behaves as a single server with service rate $n$, with no performance penalty for distributing the work across $n$ servers. Such a result is possible because the load-balancing system under JSQ satisfies the so-called complete resource pooling (CRP) condition, which ensures that there is a single bottleneck.}
To establish \eqref{eq:jsq_limiting_result}, we proceed in two steps. First, we show that the queue lengths are nearly \new{equal in} heavy traffic, $q_i \approx Q/n$ for all $i$\new{, a} property known as \emph{state space collapse} (SSC). Second, we apply the transform method to the total queue length $Q$ to obtain the heavy-traffic limit.
\subsubsection*{State Space Collapse}
\new{Since we want to show that all the queues do not deviate from their average, it is natural to work with the worst-case deviation, and so, a test function of the form $e^{\max_i|q_i-Q/n|}$, i.e., the exponential of the $\ell_{\infty}$ norm of the deviation vector. However, such a function is not amenable to clean drift analysis because of the max function, and so, we use a smooth approximation of it, viz., $\sum_{i=1}^n \left( e^{\beta(q_i - Q/n)} + e^{-\beta(q_i - Q/n)}\right)$.}\footnote{\citet{jhunjhun2024exponential} used the $\ell^\infty$ norm to get tight SSC bounds, crucial to obtain tail bounds.  However, just to obtain (classic) heavy traffic limit, it is sufficient to work with the $\ell^2$ norm, which is easier to work with but provides weaker SSC guarantees, e.g., see \citet{hurtado2020transform}}. Similar to the first two steps of the transform method, we set the drift of this test function to zero in steady state, use the queue evolution equation, carefully bound the resultant terms, and use the symmetry of the system to obtain $\mathbb{E}\left[e^{\beta|q_i-Q/n|}\right] = O(1)$, which shows that $q_i$ does not deviate too much from $Q/n$ with high probability.
\subsubsection*{Step 1 and 2: Capturing the System Dynamics and SSC Approximation}
Once SSC is established, the system behaves approximately as if the $n$ servers are pooled into a single \new{server} with rate $n$. Using the exponential test function $e^{\theta \hat{Q}}$ in steady state and using the queue evolution equation\new{,} one obtains the following expression for the MGF of the scaled total queue length:
\begin{equation}\label{eq:jsq-mgf-exact}
\mathbb{E}\bigl[e^{\theta \hat{Q}}\bigr] \;=\; \frac{ \sum_{i=1}^n \mathbb{E}\bigl[\mathbbm{1}_{\{q_i = 0\}}\, e^{\theta \hat{Q}}\bigr]}{n - \lambda e^{\epsilon \theta}} \quad \text{ for all } \quad \theta < \theta_\epsilon,
\end{equation}
where $\theta_\epsilon$ is the largest value for which the denominator $n - \lambda e^{\epsilon \theta} > 0$. One can verify that $\theta_\epsilon \to 1$ as $\epsilon \to 0$. Note that the numerator of \eqref{eq:jsq-mgf-exact} involves the idleness indicators $\mathbbm{1}_{\{q_i = 0\}}$ similar to the $M/M/1$ queue setting \eqref{eq:ct-transform-before}.
A key challenge here compared to $M/M/1$ queue is that, when $n=1$, we have $\sum_{i=1}^n \mathbb{E}\bigl[\mathbbm{1}_{\{q_i = 0\}}\, e^{\theta \hat{Q}}\bigr] = \mathbb{E}\bigl[\mathbbm{1}_{\{q_1 = 0\}}\, e^{\epsilon \theta q_1}\bigr] = \mathbb{E}\bigl[\mathbbm{1}_{\{q_1 = 0\}}\bigr]$, yielding a closed form expression as in \eqref{eq:ct-transform-equation}. For $n>1$, we do not have such a simplification in \eqref{eq:jsq-mgf-exact} as terms of the form $\mathbb{E}\bigl[\mathbbm{1}_{\{q_i = 0\}}\, e^{\epsilon \theta q_j}\bigr]$ cannot be directly simplified. Here, SSC allows us to resolve this issue since $\epsilon q_i \approx \epsilon q_j \approx \hat{Q}/n$ for all $i$. More precisely, applying SSC in \eqref{eq:jsq-mgf-exact} gives
\begin{align}\label{eq:jsq-mgf-approx}
\mathbb{E}\bigl[e^{\theta \hat{Q}}\bigr] \;=\; \frac{n \epsilon}{n - \lambda e^{\epsilon \theta}} \cdot \bigl(1 + \delta_{\text{SSC}}(n)\bigr), \ \text{ with } \  |\delta_{\text{SSC}}(n)| \leq \kappa n\epsilon \log \tfrac{1}{\epsilon},
\end{align}
where $\kappa$ is a constant and $\delta_{\text{SSC}}(n)$ quantifies the SSC violation at finite $n$ and $\epsilon$. Here, we use the fact that $\sum_{i=1}^n \mathbb{E}\bigl[\mathbbm{1}_{\{q_i = 0\}}\bigr] = n\epsilon$ simply by substituting $\theta =0$ in  \eqref{eq:jsq-mgf-exact}\new{. The} bound on $\delta_{\text{SSC}}(n)$ in \eqref{eq:jsq-mgf-approx} follows from the bound on the transform of queue imbalance $|q_i - Q/n|$ \new{in} SSC. Details \new{can be found} in \citet{jhunjhun2024exponential}.
\subsubsection*{Step 3: Solving for the MGF} Now we take the limit as $\epsilon \to 0$ implying that the SSC violation $n\epsilon \log(1/\epsilon) \to 0$ as $\epsilon \rightarrow 0$. Thus, we get $\lim_{\epsilon \to 0} \mathbb{E}\bigl[e^{\theta \hat{Q}}\bigr] = 1 / (1-\theta)$, implying that the scaled total queue length converges to $\text{Expo}(1)$ in distribution. Moreover, by SSC, all queues are approximately equal, thus, establishing the joint steady-state distribution in \eqref{eq:jsq_limiting_result}.
\subsubsection*{Tail Bounds}
Similar to Section \ref{sec:tail-bounds}, applying Markov's inequality $\mathbb{P}(\hat{Q} > x) \leq e^{-\theta x}\, \mathbb{E}[e^{\theta \hat{Q}}]$ and optimizing over $\theta \in (0, \theta_\epsilon)$ yields the tail bound
\begin{align}
    \label{eq:jsq-tail-ct}
    \mathbb{P} \left(\hat{Q} > x\right) \;\leq\; \underbrace{2ex}_{\text{pre-exponent}} \;\cdot\; \underbrace{\Big(1 + \kappa\, n\epsilon \log \frac{1}{\epsilon}\Big)}_{\text{SSC violation}} \cdot  \underbrace{e^{-\theta_\epsilon x}}_{\text{pre-limit tail}}, \quad \forall x > 1.
\end{align}
The main difference from the single-server queue setting of Section~\ref{sec:tail-bounds} is that the pre-exponent now additionally has an SSC violation term. As before, note that the \emph{pre-limit tail} $e^{-\theta_\epsilon x}$ gives the exponential decay rate\new{. As} $\epsilon \to 0$, $\theta_\epsilon \to 1$, recovering the $\mathrm{Expo}(1)$ tail of the heavy-traffic limit. A lower bound $\mathbb{P}(\hat{Q} > x) \geq \frac{1}{1-\epsilon} e^{-\theta_\epsilon x}$ (obtained by coupling with a pooled $M/M/1$ queue) shows that the decay rate $\theta_\epsilon$ is \emph{exact}, yielding the large-deviation result $\lim_{x \to \infty} -\frac{1}{x} \log \mathbb{P}(\hat{Q} > x) = \theta_\epsilon$.
 
\subsubsection{Brief on Load Balancing under Many-Server Heavy Traffic}\label{sec:jsq-many-server}
The classic heavy-traffic analysis of Section~\ref{sec:jsq-corrected} fixes the number of servers $n$ and sends $\epsilon \to 0$. In many modern \new{settings,} the number of servers $n$ itself grows, and the heavy-traffic parameter shrinks simultaneously. This is the \emph{many-server heavy-traffic} regime, where one sets $\lambda_n = n(1 - \epsilon_n)$ with $\epsilon_n = n^{-\alpha}$ for some $\alpha > 0$, so that both $n \to \infty$ and $\epsilon_n \to 0$ together.
 
There is a long line of literature on characterizing the queue length \new{behavior} under JSQ (and a related policy called power-of-$d$) in many-server heavy traffic. The mean field regime $(\alpha = 0)$ is analyzed by \citet{dobrushin_po2, mitzenmacher_po2_2}, sub-Halfin-Whitt regime $(\alpha \in (0, 0.5))$ by \citet{liu2018simple}, Halfin-Whitt regime $(\alpha = 0.5)$ by \citet{eschenfeldt_halfin_whitt, braverman_halfin_whitt, banerjee_halfin_whitt, banerjee_halfin_whitt_insensitivity}, super-Halfin-Whitt regime $(\alpha \in (0.5, 1))$ by \citet{super_halfin_whitt_lei, zhao2025many}, non-degenerate slowdown $(\alpha = 1)$ by \citet{GupWal_NDS_JSQ}, super-slowdown regime $(\alpha > 1)$ by \citet{jhunjhun2024exponential}, and classical heavy-traffic $(\alpha = \infty)$ by \citet{atilla, foschini1978basic}. We refer the reader to the survey \citet{load_balancing_survey} for an overview.
% NOTE (proof, p.25): `\citet{guang2025steady}' was struck here.  The margin note asks
% for "G. J. Foschini and J. Salz. A basic dynamic routing problem and diffusion.
% IEEE Trans. Communications, 26(3):320-327, March 1978" to be added next to
% \citet{atilla}; add it to the .bib with key `foschini1978basic'.
 
We now describe how the transform method analysis of Section~\ref{sec:jsq-corrected} extends to the many-server setting. The entire analysis carries over with $\epsilon$ replaced by $\epsilon_n$. The key observation is that the tail bound~\eqref{eq:jsq-tail-ct} and the MGF approximation~\eqref{eq:jsq-mgf-approx} remain valid as long as the SSC violation term $n\epsilon_n \log(1/\epsilon_n) = \alpha n^{1-\alpha} \log n$ vanishes as $n \to \infty$. This holds precisely when $\alpha > 1$, which is the \emph{super-slowdown} regime where $\epsilon_n$ vanishes faster than $1/n$. In this regime, the same limiting result applies, i.e., $\hat{Q} = \epsilon_n Q \xrightarrow{d} \mathrm{Expo}(1)$ and $n\epsilon_n(q_1, \ldots, q_n) \xrightarrow{d} (Y, \ldots, Y)$ with $Y \sim \mathrm{Expo}(1)$, and the pre-limit tail bounds hold with explicit dependence on $n$ and $\alpha$.
 
\subsubsection{Brief on Load Balancing with Customer Abandonment}
Section~\ref{sec:abandonment} showed that for the single-server queue,
\new{abandonment produces a phase transition and the scaled queue length goes} from exponential to
truncated Gaussian to full Gaussian as the system moves from
underloaded to overloaded. \citet{jhunjhunwala2026jsqa} extends this
to the JSQ with abandonment setting (JSQ-A) via a similar SSC result to Section~\ref{sec:jsq-corrected}, with the same three \new{regimes and the corresponding limiting distributions, see}
Table~\ref{tbl: dist}. The three-step recipe remains the same, with the abandonment term again producing a differential equation for the MGF. We refer the reader to \citet{jhunjhunwala2026jsqa} for precise statements.
 
\new{The main difficulty compared to the single server case is that} establishing MGF existence and state space collapse (SSC) both require incorporating the dynamics of abandonment. The SSC for JSQ-A
is \emph{weaker} than for JSQ without \new{abandonment, and only} a second-moment bound $\mathbb{E}[(q_i - Q/n)^2] = O(1)$ is available (instead of an MGF-level bound), since abandonment can induce large jumps that weaken the concentration of $q_i$ around $Q/n$. This weaker SSC nonetheless suffices to carry out the transform-method analysis.
 
Practically, the JSQ-A results have two implications. First, \new{SSC persists even under abandonments, and the system behaves} like a single pooled queue. Second, the phase transition (Table~\ref{tbl: dist}) gives a quantitative framework for capacity planning under \new{abandonment.}
 
\subsubsection{Brief on Load Balancing with Markov-Modulated Arrivals}
The transform method analysis naturally extends to JSQ with Markov-modulated arrivals, by combining state space collapse arguments from Section~\ref{sec:jsq-corrected} and the Poisson equation technique of Section~\ref{sec:markov-modulated}.
Similar to Section~\ref{sec:markov-modulated}, the Poisson equation absorbs the cross-terms between the modulating environment and the queue lengths, revealing an \emph{effective variance} through the constant $\kappa^*$, that replaces the i.i.d.\ variance in the limiting distribution. The result, established by \citet{HL-Gro-2026-Markov-Modulated}, shows that $\epsilon(q_1, \hdots, q_n) \stackrel{d}{\to} (Y, \hdots, Y)$, where $Y$ is an exponential distribution with rate \new{equal to the effective variance} in the classical heavy traffic regime (fixed $n$ and $\epsilon$ goes to $0$).
% NOTE (proof, p.26): the rate "$\mu+\kappa^*$" was struck with no replacement supplied.
% Changed to $1/(\mu+\kappa^*)$ for consistency with \eqref{eq:mm-limit-ct}; please confirm.
 
\subsection{The Input-Queued Switch}\label{sec:switch-corrected}
We now \new{consider an input-queued switch where the complete resource pooling condition is not satisfied.}  This model, analyzed by~\citet{Jhun_heavy_traffic}, is the first application of the transform method to a system with multiple bottlenecks. \new{Here, the collapsed state space is multi-dimensional, introducing new challenges.}
\subsubsection*{Model Description}
Consider a discrete-time $n \times n$ input-queued switch with $n$ input ports and $n$ output ports.  Packets arrive to input port $i$ destined for output port $j$ as an i.i.d.\ process $\{a_{ij}(k)\}_{k=1}^\infty$ with mean rate $\lambda_{ij}$ and variance $\sigma_{ij}^2$.  \new{T}here is a separate queue $q_{ij}$ for each input-output pair $(i,j)$, and the state of the system is the $n^2$-dimensional vector $\mathbf{q} = (q_{ij})$. At each time slot, the switch selects a \emph{schedule} (a perfect matching in the bipartite graph)\new{.} \new{E}ach input port sends to exactly one output port, and each output port receives from exactly one input port. We denote the schedule at time $k$ by $\{s_{ij}(k)\}_{i,j=1}^n$, where $s_{ij} = 1$ means input $i$ sends a packet to output $j$. The scheduling policy is \emph{MaxWeight}: the matching that maximizes $\sum_{i,j} s_{ij} q_{ij}$.  MaxWeight is throughput-\new{optimal, stabilizing} any arrival rate inside the capacity region $\mathcal{C} = \{\lambda : \sum_j \lambda_{ij} < 1\;\forall i,\; \sum_i \lambda_{ij} < 1\;\forall j\}$.
The queue evolution follows the following recursion:
\[
q_{ij}(k+1) \;=\; q_{ij}(k) + a_{ij}(k) - s_{ij}(k) + u_{ij}(k),
\]
where $u_{ij}(k) \geq 0$ is the unused service, with the complementarity condition $\langle \mathbf{q}(k+1), \mathbf{u}(k) \rangle = 0$. We consider the \emph{uniform} heavy-traffic regime: $\lambda_{ij} = (1-\epsilon)/n$ for all $(i,j)$, so that every input and output port carries the same load $1 - \epsilon$.  We further assume the \emph{symmetric variance condition} $\sigma_{ij}^2 = \sigma^2$ for all $(i,j)$.  As $\epsilon \to 0$, \emph{all} ports become simultaneously \new{saturated, meaning that there are multiple bottlenecks,} and the system does \emph{not} exhibit complete resource pooling.
\subsubsection*{State Space Collapse without CRP}
Under uniform heavy traffic, the $n^2$-dimensional queue-length vector collapses, but not to a single dimension.  Define the subspace
\[
\mathcal{S} \;=\; \bigl\{\mathbf{x} \in \mathbb{R}^{n^2} : x_{ij} = w_i + \tilde{w}_j \text{ for some } \mathbf{w}, \tilde{\mathbf{w}} \in \mathbb{R}^n\bigr\},
\]
which is the set of queue-length vectors whose entries decompose as a sum of an input component $w_i$ and an output component $\tilde{w}_j$.  Note that $\mathcal{S}$ has dimension $2n - 1$ (since adding a constant to all $w_i$ and subtracting it from all $\tilde{w}_j$ leaves $\mathbf{q}$ unchanged), implying that there is one degree of freedom. SSC, established by~\citet{MagSri_SSY16_Switch, jhunjhunwala2020lowcomplexity}, asserts that the queue-length vector concentrates near $\mathcal{S}$.
A key difference from the JSQ model is that the input-queued
switch \emph{inherently} has multiple bottlenecks.
Intuitively, for every input row $i$, the total arrival rate
$\sum_{j=1}^n \lambda_{ij} = 1-\epsilon$ saturates the maximum service
rate of $1$ (since at most one packet can leave row $i$ per slot), so
each row acts as a bottleneck; the same holds for every output column.
\new{As such, one would expect state space collapse to a $2n$-dimensional
subspace (as there is one bottleneck per constraint), however, the $n$ row
and $n$ column constraints are not independent (both sets sum to the same
total $n(1-\epsilon)$), which results in a collapse to a $2n-1$ dimensional
subspace, rather than $2n$.}
\subsubsection*{Step 1: Capturing the System Dynamics}
The transform method is applied to the full $n^2$-dimensional queue vector, using the exponential test function $e^{\epsilon \langle \boldsymbol{\theta}, \mathbf{q} \rangle}$ for $\boldsymbol{\theta}$ restricted to the subspace $\mathcal{S}$\new{,} similar to how we picked $\boldsymbol{\theta} = \theta \mathbf{1}$ for JSQ load balancing as the state space collapsed onto a one-dimensional subspace in that case, i.e., all queues are equal\new{.}
Setting the steady-state drift to zero and performing a second-order Taylor expansion in $\epsilon$ yields a pre-limit transform equation.
\subsubsection*{Step 2: Second Order Approximation}
\new{Taking} the limit $\epsilon \to 0$, the resulting functional equation, derived by~\citet{Jhun_heavy_traffic}, takes the form: for all $\boldsymbol{\theta} \in \Theta$,
\begin{equation}\label{eq:switch-functional-corrected}
\bigl(2\langle \boldsymbol{\theta}, \mathbf{1} \rangle - n \sigma^2 \langle \boldsymbol{\theta}, \boldsymbol{\theta} \rangle\bigr)\, L(\boldsymbol{\theta}) \;=\; 2n\, \langle \boldsymbol{\theta}, \mathbf{M}(\boldsymbol{\theta}) \rangle,
\end{equation}
where $L(\boldsymbol{\theta}) = \lim_{\epsilon \to 0} \mathbb{E}[e^{\epsilon \langle \boldsymbol{\theta}, \mathbf{q} \rangle}]$ is the Laplace transform of the limiting distribution, and we also have \textit{boundary terms} represented by $M_k(\boldsymbol{\theta}) = \lim_{\epsilon \to 0} \frac{1}{\epsilon} \mathbb{E}[u_k\, e^{\epsilon \langle \boldsymbol{\theta}, \mathbf{q} \rangle}]$, capturing the transform of the distribution restricted to the boundary where queue $k$ is empty.  The set $\Theta \subset \mathcal{S}$ is the domain on which the Laplace transforms $L(\boldsymbol{\theta})$ and each $M_k(\boldsymbol{\theta})$ are well-defined (bounded and analytic).
The left-hand side of~\eqref{eq:switch-functional-corrected} has the same structure as the single-server transform equation\new{.} \new{T}he term $2\langle \boldsymbol{\theta}, \mathbf{1} \rangle$ comes from the mean drift (analogous to $\epsilon$ in the single-server case), and $n\sigma^2 \langle \boldsymbol{\theta}, \boldsymbol{\theta} \rangle$ comes from the variance of the net input.  The right-hand side involves the boundary terms $\mathbf{M}(\boldsymbol{\theta})$, which play the role of the unused service.  The crucial difference from the CRP case is that there are $n^2$ boundary terms (one per queue), and \new{the functional equation captures the relationship between the distribution of those queues.}
\subsubsection*{Step 3: Solving the Functional Equation}
Solving~\eqref{eq:switch-functional-corrected} requires determining both $L(\boldsymbol{\theta})$ and the $n^2$ boundary functions $M_k(\boldsymbol{\theta})$ simultaneously. To the best of our knowledge, there is no direct method of solving the functional equation in \eqref{eq:switch-functional-corrected}.  Under the symmetric variance condition, \citet{Jhun_heavy_traffic} conjectures the solution to be
\begin{equation}\label{eq:switch-conjecture-corrected}
\epsilon q_{ij} \;\xrightarrow{d}\; \Upsilon_i + \Upsilon_{n+j} - 2\tilde{\Upsilon}, \qquad \forall\, i, j \in \{1, \ldots, n\},
\end{equation}
where $\{\Upsilon_1, \ldots, \Upsilon_{2n}\}$ are independent $\mathrm{Expo}(2/\sigma^2)$ random variables and $\tilde{\Upsilon} = \min_{1 \leq k \leq 2n} \Upsilon_k$.  The limiting queue length for pair $(i,j)$ is thus a \emph{nonlinear} combination of independent exponentials, involving the minimum $\tilde{\Upsilon}$.  An implication is that each marginal $\epsilon q_{ij}$ follows a mixture distribution: with probability $1/n$ it is $\mathrm{Expo}(2/\sigma^2)$ (when the minimum falls on $\Upsilon_i$ or $\Upsilon_{n+j}$), and with probability $1 - 1/n$ it is $\mathrm{Erlang}(2, 2/\sigma^2)$ (the sum of two independent exponentials).
 
\textit{Technical detail on uniqueness:} This conjectured distribution is shown to \emph{satisfy} the functional equation~\eqref{eq:switch-functional-corrected} by explicitly computing the Laplace transform of the random vector and verifying that it solves~\eqref{eq:switch-functional-corrected}.  However, proving that it is the \emph{unique} solution within the class of bounded analytic functions remains open for the general $n \times n$ switch.
To provide justification for the uniqueness conjecture, \citet{Jhun_heavy_traffic} also analyzes the \emph{three-queue system}\new{,} a special case of the switch where the state collapses to a two-dimensional subspace.  In this tractable setting, the functional equation involves two complex variables, and uniqueness is rigorously established \new{by making a connection to the} \emph{Carleman boundary value problem}.
Beyond the discrete-time switch, the transform method also extends to continuous-time systems that do not satisfy the CRP condition.  To demonstrate this, \citet{Jhun_heavy_traffic} analyze the \emph{$N$-system}, a parallel-server model with two customer classes and two server pools operating under MaxWeight with Poisson arrivals and exponential service.
\paragraph{Why CRP versus non-CRP matters.}  In CRP systems, the network has a single bottleneck, and SSC collapses the state to one dimension.  The transform method then produces a one-dimensional functional equation\new{,} either algebraic (solved directly), an ODE (solved by standard techniques), or requires the Poisson equation.  In non-CRP systems, multiple bottlenecks coexist, and SSC leaves a multi-dimensional collapsed state.  The functional equation is now a multi-variable equation coupling the Laplace transform to multiple boundary terms, and solving the functional equation becomes fundamentally hard.
\section{Conclusion and Future Directions}\label{sec:conclusion}
This tutorial presented the transform method, a framework for
steady-state analysis of Stochastic Processing and Matching Networks.
Starting from the discrete-time single-server queue, we showed how an
exponential test function combined with the complementarity condition
yields an exact functional equation for the MGF (or characteristic
function) of the queue length. \new{Demonstrating its versatility, the three step procedure of the transform method has been extended along two dimensions, viz.,} enriching the single-server model with state-dependent arrivals, customer
abandonment, and Markov modulation
(Section~\ref{sec:single-server-variants}), and moving to
multi-dimensional networks such as load balancing and input-queued
switches (Section~\ref{sec:networks}). We close by highlighting a few
promising directions for future research.
\paragraph{Transient analysis.} Our focus has been steady-state \new{distributions, and we do not study how long it takes to reach the steady-state. Most real world systems do not operate in steady-state because of sudden shocks, such as a batch of unexpected arrivals. 
It is important to get a handle on the transient behavior, especially when it takes a long time to reach steady-state}\new{.}
The exponential test function can in principle be applied to transient
distributions by tracking the time evolution of the MGF rather than
setting its drift to \new{zero. Developing a systematic transient theory based on this approach is an open direction.}

\paragraph{Connections to Stochastic Iterative Algorithms.} \new{While this tutorial focused on queueing models, the ideas presented here are applicable for analysis of any stochastic iterative algorithms, and this is a future research direction. One such example is the work \citet{chen2021stationary}, which considers stochastic approximation (SA) and stochastic gradient descent (SGD) under constant step-size (or learning rate). Unlike the case of diminishing step-sizes, under constant step-size, these algorithms do not converge to a point, and instead have a stationary behavior. It turns out that this steady-state iterates when scaled appropriately in terms of the step-size converge to a limiting distribution, which is a Gaussian in many settings. This result was established in \citet{chen2021stationary} using the Transform method.}

\paragraph{Applications to AI Infrastructure Design} \new{Historically, wired and wireless networks, and data center networks have inspired various stochastic processing network models that are presented in this tutorial. Today, the rapid growth of large language models with novel features such as variable-length token generation, batching, and multistage pipelines is bringing novel challenges to these classical models. This calls for new queueing models to address these problems, and given the power and versatility of the transform method, it is natural to explore its use to study these models. Future work along these lines has a potential to not address practically relevant design and operation questions, but also enrich the transform method by bringing in new ingredients into it.}

\section{Acknowledgment}
The authors acknowledge that large language models, Claude and ChatGPT, were used for editorial refinement, including paraphrasing for clarity and identifying grammatical errors throughout the paper. The final content and interpretations remain the sole responsibility of the authors.
\bibliographystyle{informs2014}
\bibliography{references_q, referencesrl}

\begin{DTonly}
\begin{APPENDIX}{Proof of Lemma~\ref{lem:poisson}}
\label{app:poisson-lemma}
We prove Lemma~\ref{lem:poisson}.  The argument adapts the continuous-time proof of
(\citet{HL-Gro-2026-Markov-Modulated}) to the discrete-time setting.
\begin{proof}{Proof of Lemma~\ref{lem:poisson}}
Recall the Poisson equation for $f$:
\[
  \bar{f} - f(i) \;=\; V_f(i^+) - V_f(i),
  \qquad\text{where}\quad
  V_f(i^+) \;:=\; \sum_{j\in\mathcal{X}} P_{ij}\, V_f(j).
\]
Multiplying both sides by $e^{\theta\epsilon q}$ and taking expectations:
\begin{equation}\label{eq:app-poisson-start}
  \E{e^{\theta\epsilon q}}\,\bar{f}
  - \E{e^{\theta\epsilon q}\,f(X)}
  \;=\;
  \E{e^{\theta\epsilon q}\bigl(V_f(X^+) - V_f(X)\bigr)}.
\end{equation}
It remains to show that the right-hand side equals
$-\theta\epsilon\,\E{e^{\theta\epsilon q}\,V_f(X^+)\,(s(X) - a(X))}$
up to the stated error. We do so by computing the drift of the test function $\phi(q,X) = e^{\theta\epsilon q}\,V_f(X)$.  From the complementarity condition \eqref{eq:key-lemma}, recall that
\[
  e^{\theta\epsilon q(k+1)} \;=\; 1 - e^{-\theta\epsilon u(k)} + e^{\theta\epsilon(q(k) + a_i(X(k),k) - s_i(X(k),k))}.
\]
Therefore,
\begin{align}
  \Delta\phi(q,X)
  &\;=\; e^{\theta\epsilon q(k+1)}\,V_f(X^+) - e^{\theta\epsilon q}\,V_f(X) \notag\\
  &\;=\; V_f(X^+)\bigl(1 - e^{-\theta\epsilon u}\bigr)
         + V_f(X^+)\,e^{\theta\epsilon q}\,e^{\theta\epsilon(a(X) - s(X))}
         - V_f(X)\,e^{\theta\epsilon q} \notag\\
  &\;=\; V_f(X^+)\bigl(1 - e^{-\theta\epsilon u}\bigr)
         + e^{\theta\epsilon q}\Bigl[
           V_f(X^+)\bigl(e^{\theta\epsilon(a(X)-s(X))} - 1\bigr)
           + \bigl(V_f(X^+) - V_f(X)\bigr)
         \Bigr],  \label{eq:app-drift}
\end{align}
where we have omitted the dependence on $k$ for ease of exposition.
Setting $\E{\Delta\phi(q,i)} = 0$ in steady state:
\begin{equation}\label{eq:app-drift-zero}
  \E{e^{\theta\epsilon q}\bigl(V_f(X^+) - V_f(X)\bigr)}
  \;=\;
  -\E{V_f(X^+)\bigl(1 - e^{-\theta\epsilon u}\bigr)}
  \;-\; \E{e^{\theta\epsilon q}\,V_f(X^+)\bigl(e^{\theta\epsilon(a(X)-s(X))} - 1\bigr)}.
\end{equation}
We now bound each term on the right-hand side.
\medskip
\noindent\textbf{Term (i):}
$\E{V_f(X^+)\bigl(1 - e^{-\theta\epsilon u}\bigr)}$.
By H\"older's inequality with $p = 1 + \xi$ and $q = 1 + 1/\xi$:
\[
  \bigl|\E{V_f(X^+)\bigl(1 - e^{-\theta\epsilon u}\bigr)}\bigr|
  \;\le\;
  \E{|V_f(X^+)|^{1+\xi}}^{\frac{1}{1+\xi}}\;
  \E{\bigl|1 - e^{-\theta\epsilon u}\bigr|^{1+\frac{1}{\xi}}}^{\frac{\xi}{1+\xi}}.
\]
The first factor is bounded by assumption.
For the second factor, since $u \le S_{\max}$ and $|1 - e^{-\theta\epsilon u}| \le C|\theta\epsilon u|$ for bounded $u$ and small enough $\epsilon$:
\[
  \E{|1 - e^{-\theta\epsilon u}|^{1+\frac{1}{\xi}}}
  \;\le\;
  C^{1+\frac{1}{\xi}}\,|\theta\epsilon|^{1+\frac{1}{\xi}}\,
  \E{u^{1+\frac{1}{\xi}}}
  \;\le\;
  C'\,\epsilon^{1+\frac{1}{\xi}}\,\E{u}
  \;=\; O\bigl(\epsilon^{2+\frac{1}{\xi}}\bigr),
\]
where we used $\E{u} = \epsilon$ and $u \le S_{\max}$.  Combining:
\[
  \text{Term (i)} \;=\; O\!\Bigl(\epsilon^{\frac{(2+1/\xi)\,\xi}{1+\xi}}\Bigr)
  \;=\; O\!\Bigl(\epsilon^{1+\frac{\xi}{1+\xi}}\Bigr).
\]
\medskip
\noindent\textbf{Term (ii):}
$\E{e^{\theta\epsilon q}\,V_f(X^+)\bigl(e^{\theta\epsilon(a(X)-s(X))} - 1\bigr)}$.
Taylor-expanding $e^{\theta\epsilon(a(X) - s(X))} - 1$:
\[
  e^{\theta\epsilon(a(X) - s(X))} - 1
  \;=\; \theta\epsilon(a(X) - s(X)) + \frac{\theta^2\epsilon^2}{2}(a(X) - s(X))^2 + O(\epsilon^3).
\]
Therefore,
\begin{align*}
  \text{Term (ii)}
  &\;=\; \theta\epsilon\,\E{e^{\theta\epsilon q}\,V_f(X^+)\,(a(X) - s(X))}
         + \frac{\theta^2\epsilon^2}{2}\,
           \E{e^{\theta\epsilon q}\,V_f(X^+)\,(a(X) - s(X))^2}
         + O(\epsilon^3).
\end{align*}
For the second term, since $|e^{\theta\epsilon q}| \le 1$ (because $\theta < 0$ and
$q \ge 0$) and $(a(X) - s(X))^2 \le (A_{\max} + S_{\max})^2$:
\[
  \frac{\theta^2\epsilon^2}{2}\,
  \bigl|\E{e^{\theta\epsilon q}\,V_f(X^+)\,(a(X) - s(X))^2}\bigr|
  \;\le\;
  \frac{\theta^2\epsilon^2}{2}\,(A_{\max}+S_{\max})^2\,\E{|V_f(X^+)|}
  \;=\; O(\epsilon^2).
\]
Hence,
\[
  \text{Term (ii)}
  \; = \; \theta\epsilon\,\E{e^{\theta\epsilon q} \, V_f(X^+) \, (a(X) - s(X))}+ O(\epsilon^2).
\]
\medskip
\noindent\textbf{Combining.}
Substituting Terms~(i) and~(ii) into~\eqref{eq:app-drift-zero}:
\begin{align*}
  \E{e^{\theta\epsilon q}\bigl(V_f(X^+) - V_f(X)\bigr)}
  &\;=\;
  -\theta\epsilon\,\E{e^{\theta\epsilon q}\,V_f(X^+)\,(a(X) - s(X))}
  + O\!\Bigl(\epsilon^{1+\frac{\xi}{1+\xi}}\Bigr).
\end{align*}
Substituting back into~\eqref{eq:app-poisson-start} and rearranging gives
\begin{align*}
  \E{e^{\theta\epsilon q}\,f(X)}
  &=
  \E{e^{\theta\epsilon q}}\,\bar{f}
  - \theta \epsilon \, \E{e^{\theta\epsilon q}\,V_f(X^+) \,(s(X) - a(X))}
  \;+\;
  O\!\Bigl(\epsilon^{1+\frac{\xi}{1+\xi}}\Bigr) \\
  &=
  \E{e^{\theta\epsilon q}}\,\bar{f}
  -
  \theta \epsilon \,\E{e^{\theta\epsilon q}\,V_f(X^+)\,(\mu(X) - \lambda(X))}
  \;+\;
  O\!\Bigl(\epsilon^{1+\frac{\xi}{1+\xi}}\Bigr),
\end{align*}
where the last equality holds by the tower property of expectation. This completes the proof.
\end{proof}
\end{APPENDIX}
\end{DTonly}

%%%%%%%%%%%%%%%%
\end{document}
%%%%%%%%%%%%%%%%